\newtheorem{theorem}{Theorem}[section]
\newtheorem{remark}[theorem]{Remark}
\newtheorem{proposition}[theorem]{Proposition}
\newtheorem{corollary}[theorem]{Corollary}
\crefname{remark}{Remark}{Remarks}
\crefname{proposition}{Proposition}{Propositions}
\crefname{corollary}{Corollary}{Corollaries}
\title{Surrogate modeling of resonant behavior in scattering problems through adaptive rational approximation and sketching\thanks{This research was funded in part by the Austrian Science Fund (FWF) projects 10.55776/F65 and 10.55776/P33477 (IP).}}
\newcommand{\email}[1]{\protect\href{mailto:#1}{#1}}
\author{Davide Pradovera\thanks{Department of Mathematics, Stockholm University, Stockholm, Sweden (\email{davide.pradovera@math.su.se}).}
\and Ralf Hiptmair\thanks{Seminar f\"ur Angewandte Mathematik, Department of Mathematics, ETH Zurich, Zurich, Switzerland (\email{hiptmair@sam.math.ethz.ch}).}
\and Ilaria Perugia\thanks{Faculty of Mathematics, University of Vienna, Vienna, Austria
  (\email{ilaria.perugia@univie.ac.at}).}}
\newcommand{\bN}{\mathbb N}
\newcommand{\bR}{\mathbb R}
\newcommand{\bC}{\mathbb C}
\newcommand{\vv}{\mathbf v}
\newcommand{\vx}{\mathbf x}
\newcommand{\vy}{\mathbf y}
\DeclareMathOperator*{\real}{Re}
\DeclareMathOperator*{\imag}{Im}
\newcommand{\abs}[1]{\left|#1\right|}
\newcommand{\norm}[1]{\left\|#1\right\|}
\newcommand{\normx}{\norm{\vx}}
\newcommand{\normsp}[1]{\norm{#1}_{\textup{sp}}}
\newcommand{\utot}{u_{\textup{tot}}}
\newcommand{\uinc}{u_{\textup{inc}}}
\newcommand{\uin}{u_{\textup{i}}}
\newcommand{\uh}{\mathbf{u}}
\newcommand{\gh}{\mathbf{g}}
\newcommand{\uinhD}{(u_{\textup{i}})_h}
\newcommand{\uinhN}{(\partial_\nu u_{\textup{i}})_h}
\newcommand{\uout}{u_{\textup{s}}}
\newcommand{\nin}{n_{\textup{i}}}
\newcommand{\Ltwofunc}[1]{L^2(#1)}
\newcommand{\Ltwobd}{\Ltwofunc{\Gamma}}
\newcommand{\Ltwobdtwo}{[\Ltwobd]^2}
\newcommand{\Honefunc}[1]{H^1(#1)}
\newcommand{\Honelocall}[1]{\Honelocfunc{\bR^d}}
\newcommand{\Honein}{\Honefunc{\Omega}}
\newcommand{\Honelocfunc}[1]{H_{\textup{loc}}^1(#1)}
\newcommand{\Honelocout}{\Honelocfunc{\bR^d\setminus\overline{\Omega}}}
\newcommand{\Hhalffunc}[1]{H^{1/2}(#1)}
\newcommand{\Hhalfbd}{\Hhalffunc{\Gamma}}
\newcommand{\Hmhalffunc}[1]{H^{-1/2}(#1)}
\newcommand{\Hmhalfbd}{\Hmhalffunc{\Gamma}}
\newcommand{\bigO}[1]{\mathcal{O}\left(#1\right)}
\newcommand{\calSingle}[1]{\mathsf{V}(#1)} 
\newcommand{\calDouble}[1]{\mathsf{K}(#1)} 
\newcommand{\OpSingle}[1]{\mathcal{V}(#1)}
\newcommand{\OpDouble}[1]{\mathcal{K}(#1)}
\newcommand{\OpHyper}[1]{\mathcal{W}(#1)}
\newcommand{\calA}[1]{\mathcal{A}(#1)}
\newcommand{\calB}[1]{\mathcal{B}(#1)}
\newcommand{\calH}[1]{\mathcal{H}(#1)}
\newcommand{\calL}[1]{\mathcal{L}(#1)}
\newcommand{\calP}{\mathcal{P}}
\newcommand{\calX}{\mathcal{X}}
\newcommand{\calXh}{\calX_h}
\newcommand{\Nh}{N_h}
\newcommand{\NK}{N_K}
\newcommand{\nK}{n_K}
\newcommand{\hatnK}{\widehat n_K}
\newcommand{\uA}[1]{\textup{A}(#1)}
\newcommand{\uAk}{\uA{k}}
\newcommand{\uB}[1]{\textup{B}(#1)}
\newcommand{\uBk}{\uB{k}}
\newcommand{\uC}[1]{\textup{C}(#1)}
\newcommand{\uCk}{\uC{k}}
\newcommand{\uCz}{\textup{C}}
\newcommand{\uH}[1]{\textup{H}(#1)}
\newcommand{\uHk}{\uH{k}}
\newcommand{\uHz}{\textup{H}}
\newcommand{\uM}{\textup{M}}
\newcommand{\uL}[1]{\textup{L}(#1)}
\newcommand{\uLk}{\uL{k}}
\newcommand{\uLz}{\textup{L}}
\newcommand{\uRz}{\textup{R}}
\newcommand{\vb}{\mathbf{b}}
\newcommand{\vbh}{\vb}
\newcommand{\uBh}{\textup{B}}
\newcommand{\vell}{\bm\xi}
\newcommand{\LINECOMMENT}[1]{\STATE{\# \textit{#1}}}
\begin{document}
\maketitle

\begin{abstract}
    This paper describes novel algorithms for the identification of (almost-)resonant behavior in scattering problems. Our methods, relying on rational approximation, aim at building surrogate models of what we call ``field amplification'', defined as the norm of the solution operator of the scattering problem, which we express through boundary-integral equations. To provide our techniques with theoretical foundations, we first derive results linking the field amplification to the spectral properties of the operator that defines the scattering problem. Such results are then used to justify the use of rational approximation in the surrogate-modeling task. Some of our proposed methods apply rational approximation in a ``standard'' way, building a rational approximant for either the solution operator directly or, in the interest of computational efficiency, for a randomly ``sketched'' version of it. Our other ``hybrid'' approaches are more innovative, combining rational-approximation-assisted root-finding with approximation using radial basis functions. Three key features of our methods are that (i) they are agnostic of the strategy used to discretize the scattering problem, (ii) they do not require any computations involving non-real wavenumbers, and (iii) they can adjust to different settings through the use of adaptive sampling strategies. We carry out some numerical experiments involving 2D scatterers to compare our approaches. In our tests, two of our approaches (one standard, one hybrid) emerge as the best performers, with one or the other being preferable, depending on whether emphasis is placed on accuracy or efficiency.
\end{abstract}

\textbf{Keywords}: Scattering problems -- Resonance -- Rational approximation -- Randomized sketching -- Frequency-domain modeling -- Boundary element method

\textbf{Mathematics Subject Classification (2020)}: 35B34 -- 35J05 -- 41A20 -- 65N38

\section{Introduction}\label{sec:intro}
This work is concerned with the study of resonating behavior arising in wave-scattering problems, which, in frequency domain, are modeled through the Helmholtz equation. It is well known from Fredholm theory that, as long as \emph{Sommerfeld radiation conditions} are enforced, under mild assumptions on the media scattering problems admit a unique solution at any nonzero \emph{real} frequency. However, in practical terms the scattering problem may no longer be well-posed as a consequence of what we informally call ``almost resonances'', namely, frequencies where the inf-sup constant of the problem is dangerously close to 0.

From a spectral viewpoint, such almost resonances can be interpreted as the ``\emph{real-axis} shadows'' of the actual resonances that scattering problems display in the \emph{complex} plane. As we discuss below in more detail, such complex resonances with nonzero imaginary parts are the solutions of certain nonlinear eigenvalue problems. Phenomenologically speaking, they correspond to a resonating behavior caused by the addition of a negative dissipation coefficient, i.e., a positive \emph{reaction} term, to the wave equation. This ultimately leads to a blow-up in the scattered field.

Almost-resonance problems are usually more severe at large frequencies, since, in the large-frequency regime, the complex resonances of the scattering problem get arbitrarily close to the real axis. A good understanding of these phenomena is paramount in many fields. For instance, resonance is sometimes exploited as a way to increase the power of a signal. On the other hand, one may want to avoid resonance to  make a physical system more stable or well-behaved. This motivates our interest in this problem.

Incidentally, we also mention that almost resonances have important consequences on the \emph{numerical conditioning} of the scattering problem after discretization. Thus, spotting almost resonances may be useful for understanding the behavior of numerical methods, e.g., in terms of convergence of iterative linear-system solvers.

At almost-resonating frequencies, the scattered wave may be rather large (in some norm) even for small incident waves. This issue may be equivalently stated in terms of a large operator norm (in suitable spaces) of the \emph{solution operator} of the scattering problem, which, at fixed frequency, maps an incident wave to a scattered wave. In this work, we use this norm, under the name of ``\emph{field amplification}'', as a quantitative measure of almost-resonating behavior.

For a given scattering problem, our ultimate task is to evaluate the field amplification over a prescribed range of frequency values. A key observation is that the cost of evaluating field amplification over a whole frequency range may be rather high, especially if we use a high resolution both in space and frequency. Indeed, one has to assemble the discrete solution operator and then determine its norm (e.g., by SVD) at each frequency belonging to a sampling grid that covers the target frequency range.

To lessen this burden, we rely on \emph{surrogate modeling}, which suggests to replace the expensive high-fidelity scattering model with a more economical low(er)-fidelity surrogate. The construction of such surrogate (the so-called ``training'' phase) is generally not cheap, since it must involve several evaluations of the high-fidelity model. However, it must be carried out only once, independently of the desired number of frequency-domain samples. Ignoring further training costs for simplicity, the complexity of a surrogate-based method for approximating scattering problems is often $\bigO{c_{\textup{high-fidelity}}n+c_{\textup{surrogate}}N}$, with $n$ denoting the numbers of training samples and $N$ denoting the number of target frequency evaluations. Compared to the ``brute force'' approach where the high-fidelity model is evaluated sequentially, computational time can be saved as long as the sampling is parsimonious ($n\ll N$) and the surrogate is efficient ($c_{\textup{surrogate}}\ll c_{\textup{high-fidelity}}$).

In recent years, surrogate modeling has often been applied with the objective of reducing the simulation costs linked to scattering problems \cite{li21,vi11,zimmerling23,bruno24,pradovera22,hanke12,grubisic23,chen10}. In order to build an effective surrogate, a careful balance usually has to be struck between \emph{a priori} knowledge of the high-fidelity problem's structure and \emph{a posteriori} information obtained from data. Both \emph{a priori} and \emph{a posteriori} information must be leveraged in the quest for an efficient method, i.e., keeping $n$ and $c_{\textup{surrogate}}$ low while achieving an acceptable approximation accuracy. The \emph{a priori} knowledge on the problem is crucial for this, since it is what enables the selection of the ``approximation class'' within which the surrogate is then sought through a data-driven strategy. In the context of surrogate modeling, the approximation class is crucial for the effectiveness of a numerical algorithm.

As an example of this, we mention \cite{grubisic23}, whose specific target is the approximation of field amplification just like in this work. The authors of \cite{grubisic23} achieve efficiency with their surrogate-based method by a careful combination of adaptive sampling (keeping $n$ low) and piecewise-polynomial approximation of the reciprocal of the field amplification (keeping $c_{\textup{surrogate}}$ low).

However, the effectiveness of the algorithm in \cite{grubisic23} is ultimately limited by the fact that the chosen approximation class only partially exploits the structure of the high-fidelity problem. Indeed, the method relies only on the piecewise-smooth nature of the field amplification but not on the meromorphy of the underlying scattering problem, cf.~\cref{sec:spectral}. In this work, we try to remedy this, by developing methods that better profit from both features. To this aim, we appeal to some key results from operator theory to draw a key connection between our approximation target, namely, field amplification, and a certain nonlinear eigenvalue problem induced by the scattering problem's differential operator.

A key aspect of our work is that, in building a surrogate for the field amplification, we assume that we can sample at \emph{only real} wavenumbers. This sets our work apart from, e.g., \cite{beyn12,bruno24}. Fundamentally, we do this because field amplification is a real quantity, related to real wavenumbers, so it should be possible to approximate it by sampling real wavenumbers only. Moreover, and from a more practical viewpoint, non-real wavenumbers are sometimes out of reach due to limitations in the numerical software that is used to discretize the scattering problem in the first place, and ultimately to compute field amplification. Indeed, adding an imaginary component to the wavenumber is arguably nonphysical, so that, by design, it may not be allowed by, e.g., finite- or boundary-elements libraries.

We stress that the focus of this paper is on the derivation and empiric validation of numerical algorithms. We conduct some mathematical analysis, but have to point out that some aspects of our new algorithm are still beyond the scope of rigorous theory.

The outline of the paper is as follows. After introducing the target scattering problem and its boundary-element discretization in \cref{sec:bem}, its above-mentioned \emph{spectral} properties are analyzed in \cref{sec:spectral}. Our novel methods are then outlined in \cref{sec:approx,sec:hybrid}, with the former discussing more ``standard'' approximation strategies, and the latter presenting a more ``innovative'' hybrid technique. Numerical results testing and comparing our proposed methods follow in \cref{sec:numexp}, demonstrating the effectiveness of the hybrid technique introduced in \cref{sec:hybrid}.

\section{Boundary-integral equations}\label{sec:bem}

We consider a general scattering problem in $d$ dimensions, with $d\in\{2,3\}$, modeled through the Helmholtz equation with outgoing Sommerfeld radiation conditions
\begin{equation}\label{eq:strongform}
    \begin{cases}
        -\Delta\utot-(kn)^2\utot=0\quad&\textup{in }\bR^d,\\
        \left.\nabla(\utot-\uinc)\right\rvert_\vx\cdot\frac{\vx}{\normx}-\textup{i}kn_\infty\left.(\utot-\uinc)\right\rvert_\vx=o\left(\normx^{\frac{1-d}{2}}\right)\quad&\textup{as }\normx\to\infty.
    \end{cases}
\end{equation}
The unknown is the \emph{total wave} $\utot:\bR^d\to\bC$, $\textup{i}=\sqrt{-1}$ is the imaginary unit, and $\uinc:\bR^d\to\bC$ is a given \emph{incident} wave. Moreover, $n:\bR^d\to\bR_{>0}$ is the index of refraction, which is assumed to attain positive constant values $\nin$ and~$n_\infty$ inside and outside, respectively, of a suitable bounded, Lipschitz domain $\Omega\subset\bR^d$. Without loss of generality, by a scaling argument, we can set $n_\infty:=1$ outside $\Omega$, so that
\begin{equation}\label{eq:refraction}
    n(\vx)=\begin{cases}
        \nin\in\bR_{>0}\quad&\textup{for }\vx\in\Omega,\\
        1\quad&\textup{for }\vx\notin\Omega.
    \end{cases}
\end{equation}
Hence, the boundary $\Gamma:=\partial\Omega$ represents an interface between two materials with different, but uniform, acoustic properties. Generalizations to non-constant $\nin$ are possible.

In the following, we will sometimes explicitly denote the dependence of waves $u_\bullet:\bR^d\to\bC$ on the wavenumber $k\in\bR$, through the notations $u_\bullet(k)$ or $u_\bullet(\vx;k)$.

Under the assumption that the incident wave satisfies the Helmholtz equation with index of refraction $1$, i.e.,
\begin{equation}\label{eq:incident}
    -\Delta\uinc-k^2\uinc=0\quad\textup{in }\bR^d,
\end{equation}
we can equivalently cast the problem in \emph{transmission} form, denoting by $\uin$ the total field within $\Omega$ and by $\uout$ the scattered field outside $\Omega$:
\begin{equation}\label{eq:transmissionstrongform}
    \begin{cases}
        -\Delta\uin-(k\nin)^2\uin=0\quad&\textup{in }\Omega,\\
        -\Delta\uout-k^2\uout=0\quad&\textup{in }\bR^d\setminus\overline\Omega,\\
        \uin=\uout+g_D\quad&\textup{on }\Gamma,\\
        \partial_\nu\uin=\partial_\nu\uout+g_N\quad&\textup{on }\Gamma,\\
        \left.\nabla\uout\right\rvert_\vx\cdot\frac{\vx}{\normx}-\textup{i}k\uout(\vx)=o\left(\normx^{\frac{1-d}{2}}\right)\quad&\textup{as }\normx\to\infty.
    \end{cases}
\end{equation}
Above, we have introduced $\nu$, the outward-pointing normal to $\Gamma$, as well as
\begin{equation*}
    \utot(\vx;k)=:\begin{cases}
        \uin(\vx;k)\quad&\textup{for }\vx\in\Omega,\\
        \uout(\vx;k)+\uinc(\vx;k)\quad&\textup{for }\vx\notin\Omega.
  \end{cases}
\end{equation*}
The jump data $(g_D,g_N)\in\Hhalfbd\times\Hmhalfbd$ are defined based on the incident wave, through Dirichlet and Neumann trace operators, namely,
\begin{equation*}
    g_D=\left.\uinc\right\rvert_{\Gamma}\quad\textup{and}\quad g_N=\left.(\partial_\nu\uinc)\right\rvert_{\Gamma}.
\end{equation*}

It is known that, under the above assumptions, with the addition of smoothness requirements on $\Gamma$ and $\uinc$, the scattering problems in \cref{eq:strongform,eq:transmissionstrongform} admit weak solutions $\utot\in\Honelocfunc{\bR^d}$ and $(\uin,\uout)\in\Honein\times\Honelocout$, respectively, for all nonzero wavenumbers $k\in\bR\setminus\{0\}$ \cite{grubisic23}.

\subsection{Second-kind boundary-integral equations}

Closely following \cite[Section 2]{grubisic23} and \cite[Section~1.3]{HMS21}, we rewrite the transmission problem \cref{eq:transmissionstrongform} as a boundary-integral equation. We leverage representations of $\uin$ and $\uout$ in terms of \emph{layer potentials}, via the free-space Green's function for the Helmholtz operator $\Delta + \kappa^{2}$, namely,
\begin{equation*}
    G(\vx;\kappa) :=\begin{cases}
        \frac{\textup{i}}4H_0(\kappa\normx)\quad&\textup{if }d=2,\\
        \frac1{4\pi\normx}\exp(\textup{i}\kappa\normx)\quad&\textup{if }d=3,\\
    \end{cases}
\end{equation*}
with $H_0$ the index-0 Hankel function of first kind. Note, in particular, that the Green's function depends on $\kappa$ in a very \emph{nonlinear} but \emph{holomorphic} way. The (unique) solution of \cref{eq:transmissionstrongform} admits a layer potential representation \cite[Eq.~(5)]{grubisic23}
\begin{equation}
  \label{eq:green}
    \begin{cases}
      \uin=-\calSingle{k\nin}(\partial_\nu\uin|_\Gamma)-
      \calDouble{k\nin}(\uin|_\Gamma)\quad&\text{in }\Omega,\\
      \uout=\calSingle{k}(\partial_\nu\uout|_\Gamma)+
      \calDouble{k}(\uout|_\Gamma)&\text{in }\bR\setminus\overline{\Omega},
    \end{cases}
\end{equation}
with the ``single-layer'' and ``double-layer'' linear integral operators defined in a parameter-dependent way as
\begin{subequations}
  \label{eq:opssingledouble}
  \begin{equation}
    \label{eq:spo}
      \calSingle{\kappa}:\Hmhalfbd\ni\psi\mapsto\int_{\Gamma}G(\cdot-\vy;\kappa)\psi(\vy)\textup{d}S(\vy) \in\Honelocall\;,
  \end{equation}
  \begin{equation}
      \label{eq:dpo}
      \calDouble{\kappa}:\Hhalfbd\ni\psi\mapsto-\int_{\Gamma}\partial_{\nu(\vy)}G(\cdot-\vy;\kappa)\phi(\vy)\textup{d}S(\vy)\in\Honelocall\;.
  \end{equation}
\end{subequations}
Applying Dirichlet and Neumann traces to the single- and double-layer potential from \cref{eq:opssingledouble} yields four \emph{boundary-integral operators}, whose integral representations read
  \begin{gather}
    \label{eq:biop}
    \begin{aligned}
      \OpSingle{\kappa}\psi & := \int_{\Gamma}G(\cdot-\vy;\kappa)\psi(\vy)\textup{d}S(\vy),\\
      \OpDouble{\kappa}\phi & :=
      -\int_{\Gamma}\partial_{\nu(\vy)}G(\cdot-\vy;\kappa)\phi(\vy)\textup{d}S(\vy),\\
      \OpDouble{\kappa}^{*}\phi & :=
      -\int_{\Gamma}\partial_{\nu(\vx)}G(\cdot-\vy;\kappa)\phi(\vy)\textup{d}S(\vy),\\
      \OpHyper{\kappa}\phi & :=
      -\int_{\Gamma}\partial_{\nu(\cdot)}\partial_{\nu(\vy)}G(\cdot-\vy;\kappa)\phi(\vy)\textup{d}S(\vy),
    \end{aligned}
  \end{gather}
see, e.g., \cite[Section~3.3]{sauter11}. These are the building blocks for the second-kind single-trace boundary-integral equation satisfied by the Dirichlet and Neumann traces of the solution of the scattering transmission problem \cref{eq:transmissionstrongform}. More precisely, applying both trace operators to the representation formula \cref{eq:green}, appealing to jump relations \cite[Section~3.1.1]{sauter11}, and taking into account transmission conditions, we obtain
\begin{equation}\label{eq:bdintform}
    \calA{k}
    \begin{bmatrix}
        \uin|_\Gamma\\ \partial_\nu\uin|_\Gamma
    \end{bmatrix}=\calB{k}\begin{bmatrix}
      g_D\\ g_N
    \end{bmatrix},
\end{equation}
with the compound boundary-integral operators ($\mathcal{I}$ being the identity operator in $\Ltwobd$)
\begin{equation*}
    \calA{k}:=\begin{pmatrix}
        \mathcal{I}+\OpDouble{k\nin}-\OpDouble{k} & \OpSingle{k}-\OpSingle{k\nin}\\
        \OpHyper{k}-\OpHyper{k\nin} & \mathcal{I}+\OpDouble{k\nin}^*-\OpDouble{k\nin}^*
    \end{pmatrix}
\end{equation*}
and
\begin{equation*}
    \calB{k}:=\begin{pmatrix}
        \frac12\mathcal{I}-\OpDouble{k} & \OpSingle{k}\\
        \OpHyper{k} & \frac12\mathcal{I}+\OpDouble{k}^*
    \end{pmatrix}.
\end{equation*}

The former operator $\calA{k}$ is bounded on $\Ltwobdtwo$ \cite[Lemma~1]{grubisic23}. Other properties of this formulation of the scattering problem, including its well-posedness, are discussed in \cref{sec:spectral}. Another key feature of \cref{eq:bdintform} is the nonlinear but holomorphic dependence of the two operators $\calA(k)$ and $\calB(k)$ on the wavenumber $k$.

\begin{remark}\label{rem:RHSplanewave}
    In several cases of practical interest, $(g_D,g_N)$ contains Dirichlet and Neumann traces on $\Gamma$ of a field $\uinc$ that locally solves the Helmholtz equation with unit refraction index, cf.~\cref{eq:incident}. One can verify that such boundary data is an eigenfunction of $\calB{k}$ with unit eigenvalue \cite[Theorem 2.6]{claeys13}, i.e.,
    \begin{equation*}
        \calB{k}\begin{bmatrix}
            g_D\\ g_N
        \end{bmatrix}=\begin{bmatrix}
            g_D\\ g_N
        \end{bmatrix}.
    \end{equation*}
    Thus, one may sometimes simplify \cref{eq:bdintform} by replacing $\calB{k}$ with the identity operator.
\end{remark}

\subsection{Galerkin boundary-element discretization}\label{sec:bemmat}

We consider the boundary-integral equation \cref{eq:bdintform} in $\Ltwobdtwo$ and use the $\Ltwobd$ inner product to cast it into a variational form, which is the starting point for Galerkin discretization. Next, we pick a finite-dimensional subspace $\calXh$ of $\Ltwobd$, with $\Nh=\dim\calXh$, and use $\calXh^2$ as trial and test spaces. Then we choose a basis $\{\psi_i\}_{i=1}^{\Nh}$ of $\calXh$, which induces a basis $\{[\psi_{i},0]^{\top}\}_{i=1}^{\Nh}\cup\{[0,\psi_{i}]^{\top}\}_{i=1}^{\Nh}$ of $\calXh^2$. We use this latter basis to convert \cref{eq:bdintform} into a $(2\Nh)\times(2\Nh)$ linear system of equations
\begin{equation}\label{eq:bem}
    \uAk\uh=\uBk\gh,
\end{equation}
where the vector $\gh$ is defined as the coefficients (with respect to the chosen basis) of the $\Ltwobdtwo$-orthogonal projection of $[g_D,g_N]^\top$ onto $\calXh^2$. Moreover, \cref{eq:bem} features the matrix $\uBk$, defined entry-wise as
\begin{equation*}
    \uBk_{ij}:=\begin{cases}
        \Big(\big({\textstyle\frac12}\mathcal{I}-\OpDouble{k}\big)\psi_j,\psi_i\Big)\quad&\text{if }i\leq \Nh\text{ and }j\leq \Nh,\\
        \Big(\OpSingle{k}\psi_{j-\Nh},\psi_i\Big)\quad&\text{if }i\leq \Nh\text{ and }j>\Nh,\\
        \Big(\OpHyper{k}\psi_j,\psi_{i-\Nh}\Big)\quad&\text{if }i>\Nh\text{ and }j\leq \Nh,\\
        \Big(\big({\textstyle\frac12}\mathcal{I}+\OpDouble{k}^*\big)\psi_{j-\Nh},\psi_{i-\Nh}\Big)\quad&\text{if }i>\Nh\text{ and }j>\Nh,\\
    \end{cases}
\end{equation*}
with $(u,v)$ denoting the $\Ltwobd$-inner product, as well as $\uAk=\uM+\uBk-\uB{k\nin}$, with $\uM$ being the $\Ltwobdtwo$-mass matrix, defined entry-wise as
\begin{equation}
    \label{eq:bemmass}
    \uM_{ij}:=\begin{cases}
        \big(\psi_j,\psi_i\big)\quad&\text{if }i\leq \Nh\text{ and }j\leq \Nh,\\
        \big(\psi_{j-\Nh},\psi_{i-\Nh}\big)\quad&\text{if }i>\Nh\text{ and }j>\Nh,\\
        0\quad&\text{otherwise}.\\
    \end{cases}
\end{equation}
Note that we are glossing over some ``variational crimes'' committed through the inevitable use of numerical quadrature. We refer to \cite[Chapter~5]{sauter11} for details on this issue and a comprehensive analysis.

Once the vector $\uh=\uh(k)\in\bC^{2\Nh}$ solving \cref{eq:bem} has been found, approximations of the Dirichlet and Neumann traces can be recovered through linear combinations of the basis functions:
\begin{gather*}
    u_{i}|_\Gamma \approx\uinhD=\sum_{j=1}^{\Nh}(\uh)_j\psi_j\quad\textup{and}\quad
    \partial_{\nu}u_{i}|_\Gamma \approx \uinhN=\sum_{j=1}^{\Nh}(\uh)_{\Nh+j}\psi_j.
\end{gather*}
The \emph{boundary-element method} (BEM) relies on a particular construction of the discrete space $\calXh$ built by first triangulating $\Gamma$ and then using piecewise polynomial spaces on the resulting triangulation. Details are given in \cite[Chapter~4]{sauter11}.

\begin{remark}\label{rem:bemwithoutB}
    In \cref{rem:RHSplanewave}, we have mentioned how one can sometimes simplify the boun\-dary-integral formulation by replacing the operator $\calB{k}$ with the identity. In the corresponding discrete formulation, one must replace $\uBk$ with the mass matrix $\uM$ defined in \cref{eq:bemmass}.
\end{remark}

\section{Scattering spectrum and field amplification}\label{sec:spectral}

The well-posedness of the boundary-integral formulation \cref{eq:bdintform} and of its BE discretization \cref{eq:bem} may be determined by a Fredholm-alternative argument \cite[Section~2.1.4]{sauter11}, based on the observation that, for all $k\in\bR$, $\calA{k}$ is a compact perturbation of the identity operator in $\Ltwobdtwo$. This makes it an operator of index zero, see, e.g., \cite{sloan92,sauter11,grubisic23}. Thanks to the injectivity of $\calA{k}$ for any nonzero $k$ \cite[Lemma~1.7]{HMS21}, one can conclude that \cref{eq:bdintform} is well-posed for all nonzero wavenumbers $k$. From \cite[Section~4.2.3]{sauter11} we learn that, provided that $\calXh$ possesses sufficiently good $\Ltwobd$-approximation properties, the discretized boundary-integral equation \cref{eq:bem} is also well-posed.

The homogeneous version of the continuous and discretized boundary-integral equations
\begin{gather}
  \label{eq:bdintformevp}
    \calA{k}
    \begin{bmatrix}
        \uin\\ \partial_\nu\uin
    \end{bmatrix}=0\quad\textup{and}\quad\uAk\uh=0,
\end{gather}
obtained by setting $g_D=g_N=0$, has some relevance on its own. Indeed, it can be interpreted as a \emph{nonlinear eigenvalue problem}, with $k$ and $[\uin,\partial_\nu\uin]^\top$ playing the parts of eigenvalue and eigenfunction, respectively. This problem has been extensively studied, and many of its theoretical properties have been explored before. For instance, eigenvalues must have negative imaginary parts \cite[Appendix A]{moiola19}, with the exception of $0$, which admits the constant eigenfunction $[\uin,\partial_\nu\uin]^\top\equiv[1,0]^\top$. Moreover, as we show in the next section, the set of eigenvalues is discrete and has no finite accumulation points.

\subsection{Some properties of the solution operator}

Most results presented in this section apply regardless of whether the infinite-dimensional boundary-integral formulation \cref{eq:bdintform} or its BE discretization \cref{eq:bem} is considered. As such, we employ the symbol $\calL{k}$ to denote either $\calA{k}$ or $\uAk$.

We first recall Keldysh's theorem for the inverse of a parameter-dependent operator, generalizing \cite[Theorem 2.8]{guttel17} to the operator setting. See also \cite[Chapter 5]{gohberg69} and references therein for more details.

\begin{theorem}\label{th:keldysh}
    Let $K\subset\bC$ be compact. Let $\calL{k}$ be an operator-valued function of $k$, holomorphic (with respect to $k$) over $K$. We assume that, for all $k\in K$, $\calL{k}$ is a Fredholm operator of index zero over some Hilbert space $\calX$, e.g., $(\calL{k}-I):\calX\to\calX$ is compact, with $I$ the identity operator over $\calX$. If $\calL{k}^{-1}$ exists for at least one value of $k\in K$, then
    \begin{equation}\label{eq:keldyshgen}
        \calL{k}^{-1}=\sum_{i=1}^M\sum_{j=1}^{m_i}\frac{\calP_{i,j}}{(k-\lambda_i)^j}+\calH{k}\quad\textup{for all }k\in K\setminus\{\lambda_i\}_{i=1}^M.
    \end{equation}
    Above, we have introduced several new objects, which we characterize as follows:
    \begin{itemize}
        \item $\Lambda_K=\{\lambda_i\}_{i=1}^M\subset K$, whose elements are assumed pairwise distinct without loss of generality, is the \emph{finite} set of values of $k\in K$ where $\calL{k}$ is not invertible.
        \item For each $i$, $m_i\geq1$ is the length of the longest \emph{chain of generalized eigenvectors} corresponding to $\lambda_i$. The positive integer $m_i$ represents the \emph{order} of $\lambda_i$ as a \emph{pole} of $\calL{k}^{-1}$, in a complex-analysis sense.
        \item For each $i$ and $j$, $\calP_{i,j}$ is an (oblique) projection operator whose range lies within the generalized eigenspace corresponding to the eigenvalue $\lambda_i$; for each $i$, $\calP_{i,m_i}$ is nonzero.
        \item $\calH{k}$ is an operator-valued function, holomorphic (with respect to $k$) over $K$.
    \end{itemize}
    Note, in particular, that $\Lambda_K$ and $\calH{k}$ are $K$-dependent.
\end{theorem}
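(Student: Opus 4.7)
The plan is to combine the analytic Fredholm theorem with a local Grushin (or Schur-complement) reduction that makes the pole structure of $\calL{k}^{-1}$ explicit, and then to match Laurent coefficients in order to identify the principal parts with generalized eigenvectors.

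First I would establish finiteness of $\Lambda_K$. Since $\calL{k}$ is holomorphic on an open neighborhood of $K$ and $(\calL{k}-I)$ is compact for every $k$, the analytic Fredholm theorem (see, e.g., \cite[Chapter~5]{gohberg69}) applies: on each connected component of this neighborhood the map $k\mapsto\calL{k}^{-1}$ is either nowhere defined or meromorphic with isolated poles having no accumulation point in that component. The hypothesis that $\calL{k}$ is invertible at some $k\in K$ rules out the first alternative on the relevant component, and analytic continuation extends meromorphy throughout. Compactness of $K$ then forces $\Lambda_K$ to be finite.

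Next I would fix a single $\lambda_i\in\Lambda_K$ and analyze $\calL{k}^{-1}$ in a small disk $D_i\subset\bC$ centered at $\lambda_i$ and disjoint from the rest of $\Lambda_K$. Since $\calL{\lambda_i}$ has Fredholm index zero, $\ker\calL{\lambda_i}$ and a complement of the range of $\calL{\lambda_i}$ share a common finite dimension $d\ge 1$. Choosing biorthogonal bases of these subspaces, one constructs finite-rank operators $E:\bC^d\to\calX$ and $F:\calX\to\bC^d$ so that the augmented block operator
\[
    \begin{pmatrix} \calL{k} & E \\ F & 0 \end{pmatrix}
\]
on $\calX\oplus\bC^d$ is invertible at $k=\lambda_i$, and hence, by continuity and holomorphy, on all of $D_i$ after possibly shrinking it. Writing its inverse in block form and applying the Schur-complement identity expresses $\calL{k}^{-1}$, wherever it exists, as a holomorphic operator-valued piece minus a term of the form $\mathcal{E}_+(k)\,M(k)^{-1}\,\mathcal{E}_-(k)$, where $M(k)$ is a $d\times d$ holomorphic matrix-valued function with $\det M(\lambda_i)=0$ but $\det M\not\equiv 0$ in $D_i$. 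The Smith normal form of $M(k)$ then delivers a finite-order Laurent expansion for $M(k)^{-1}$, and substituting back yields the local version of \cref{eq:keldyshgen} with some integer $m_i\ge 1$.

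The last step is to identify $m_i$ and the residues $\calP_{i,j}$ with the Jordan structure of $\calL{\cdot}$ at $\lambda_i$. Plugging the local Laurent expansion into $\calL{k}\calL{k}^{-1}=I$ and matching coefficients of $(k-\lambda_i)^r$ produces a triangular system whose nontrivial solutions are exactly the chains of generalized eigenvectors $v_0,v_1,\ldots$ characterized by $\sum_{r=0}^{j}\tfrac{1}{r!}\mathcal{L}^{(r)}(\lambda_i)\,v_{j-r}=0$ for $j=0,1,\ldots$; this forces the range of each $\calP_{i,j}$ to lie in the generalized eigenspace at $\lambda_i$, makes $m_i$ equal to the maximal chain length, and guarantees $\calP_{i,m_i}\neq 0$. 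Finally, $\calH{k}$ is defined by subtracting from $\calL{k}^{-1}$ the finite sum of principal parts over $\Lambda_K$; since this removes every singularity on $K$, what remains extends holomorphically. The main obstacle I foresee is the rigorous transport of Jordan chains through the Grushin reduction: in finite dimensions the Smith normal form reads them off directly, but in the operator setting one must verify carefully that the chains of $M$ correspond bijectively to genuine generalized eigenvector chains of $\calL$, which is the technical heart of the Gohberg--Sigal theory underlying \cite[Chapter~5]{gohberg69}.
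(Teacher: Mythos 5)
The paper does not actually supply a proof of \cref{th:keldysh}: it is presented as a recalled result, with pointers to the operator-valued Keldysh theorem in the literature (Güttel--Tisseur for the matrix case, Gohberg--Krein for the operator-theoretic framework), so there is no ``paper's own proof'' to compare against. On its own merits, your sketch is a correct and faithful reconstruction of exactly the argument that underlies those references: analytic Fredholm theory gives meromorphy of $k\mapsto\calL{k}^{-1}$ and discreteness of the singular set (hence finiteness of $\Lambda_K$ by compactness of $K$); a Grushin/Schur-complement reduction near each $\lambda_i$ replaces the infinite-dimensional resolvent by a holomorphic $d\times d$ effective matrix $M(k)$; the local Smith form of $M(k)$ produces the Laurent principal part; and matching powers of $(k-\lambda_i)$ in $\calL{k}\calL{k}^{-1}=I$ links the pole order $m_i$ and the ranges of the residue operators $\calP_{i,j}$ to the Jordan-chain structure. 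You also correctly identify the genuinely delicate step, namely the bijective transport of Jordan chains between $M$ and $\calL$ through the Grushin reduction, which is indeed the technical core of the Gohberg--Sigal theory that the paper defers to. The only small discrepancy worth flagging is notational: the root-function condition you write carries the usual $1/r!$ factors, whereas the paper's \cref{rem:jordanchain} omits them; this is just a rescaling of the chain vectors by factorials and does not affect the chain length $m_i$, but you should harmonize the normalization when citing the paper's definition.
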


\begin{remark}\label{rem:jordanchain}
    Keldysh's theorem is founded on complex analysis and operator theory. Specifically, the ``chains of generalized eigenvectors'', which allow the definition of the pole orders $m_i$, are the nonlinear extension of ``Jordan chains'', which play a pivotal role in linear eigenvalue problems for non-normal (non-diagonalizable) matrices. In the framework of \cref{th:keldysh}, we call $\{v_j\}_{j=0}^{m-1}\subset\calX\setminus\{0\}$ a length-$m$ chain of generalized eigenvectors associated to $k=\lambda$ for $\calL{k}$ if
    \begin{equation*}
        \sum_{n=0}^j\left.\left(\partial_k^{j-n}\calL{k}\right)\right|_{k=\lambda}v_n=0\quad\textup{for all }j=0,\ldots,m-1.
    \end{equation*}
\end{remark}

Since the boundary-integral operators \cref{eq:biop} and their BE discretizations appearing in \cref{eq:bem} depend on $k$ in a holomorphic way, the above theorem applies to both infinite- and finite-dimensional cases, by defining $\calX=\Ltwobdtwo$ and $\calX=\bC^{2\Nh}$, respectively.

As already mentioned, $\calL{k}$ is invertible for all nonzero wavenumbers $k$ with non-negative imaginary part. As a consequence, $\calL{k}^{-1}$ exists for at least one value of $k\in\bC$, and \cref{th:keldysh} states that $\calL{k}^{-1}$ is \emph{meromorphic} over $\bC$.
By applying \cref{th:keldysh} over compact subsets of the upper half of the complex plane, one can conclude that $\calL{k}^{-1}$ is well-defined and holomorphic there. In addition, the set $\Lambda\subset\bC$ of wavenumbers $k$ such that $\calL{k}$ is not invertible cannot have any finite accumulation points.

Based on these two properties, a simple analytic-continuation argument shows that, given a real interval not containing $0$, $[k_{\textup{min}},k_{\textup{max}}]\subset\bR\setminus\{0\}$, there exists a finite-thickness ``strip''
\begin{equation}\label{eq:strip}
    \{z\in\bC,k_{\textup{min}}\leq\real(z)\leq k_{\textup{max}},-\varepsilon\leq\imag(z)\leq\varepsilon\}
\end{equation}
where $\calL{k}^{-1}$ is well-defined and holomorphic, as long as $\varepsilon>0$ is sufficiently small. Specifically, $(-\varepsilon)$ must be larger than the imaginary part of any eigenvalues $\lambda$ whose real part is within the chosen wavenumber range. However, in most settings, one can show that eigenvalues get arbitrarily close to the real line \cite{moiola19}, so that the above-mentioned ``strip of holomorphy'' must necessarily get thinner as one moves to larger (in magnitude) real wavenumber ranges.

This last feature can be equivalently stated in terms of possibility of approximation. Given a real interval, the thickness of the strip of holomorphy is directly related to the ``ease'' with which $\calL{k}^{-1}$ can be approximated by polynomials. Roughly speaking, the rate of convergence of the error achieved, e.g., by approximating $\calL{k}^{-1}$ with Chebyshev polynomials, improves as the strip of holomorphy becomes thicker. For large(r) wavenumbers, the thinness of the strip of holomorphy behooves one to discard polynomials altogether, resorting to other approximation strategies instead. In particular, given the meromorphy of $\calL{k}^{-1}$, approximation by \emph{rational functions} appears natural, cf.~\cref{sec:sub:rat}.

\subsection{Field amplification}\label{sec:sub:field}

We are finally ready to properly introduce the quantity whose approximation is the target of our work, to which we refer as ``field amplification'' $\phi=\phi(k)$. We informally define it as the (operator) norm of the solution operator of the scattering problem, mapping incident data to the solution field. More rigorously, employing the boundary-integral form with the simplification described in \cref{rem:RHSplanewave}, we define
\begin{equation}\label{eq:amplificationinf}
    \phi(k)=\norm{\calA{k}^{-1}}_{\Ltwobdtwo\to\Ltwobdtwo}=\sup_{(g_D,g_N)\in\Ltwobdtwo\setminus\{(0,0)\}}\norm{\calA{k}^{-1}\begin{bmatrix}
        g_D\\g_N
    \end{bmatrix}}_{\Ltwobdtwo}\bigg/\norm{\begin{bmatrix}
        g_D\\g_N
    \end{bmatrix}}_{\Ltwobdtwo}.
\end{equation}

As the name suggests, field amplification measures how large the norm of the scattered solution field can be, with respect to that of the incident field. This quantity is of great importance in several settings, e.g., to study the desired or undesired resonant behavior of physical systems. Moreover, field amplification has an impact on the numerics of solving scattering problems, since it is related to the conditioning of the BE problem. If $\phi(k)$ is large, significant errors may be incurred due to round-off or other inaccuracies in the data, when numerically computing the scattered field. For instance, knowledge of $\phi$ (even if approximate) allows one to draw quantitative conclusions on the magnitude of perturbations in the scattered field, resulting from perturbations in the incoming wave. This is of particular interest, e.g., in uncertainty quantification.

Computing $\phi$ in practical settings demands a discretization of the infinite-dimensional space $\Ltwobdtwo$. As such, we replace the analytic field amplification \cref{eq:amplificationinf} with its discrete counterpart, which we still denote by $\phi$ with an abuse of notation:
\begin{equation*}
    \phi(k)=\norm{\uAk^{-1}\uM}_{\textup{op},\uM}:=\sup_{\vv\in\bR^{2\Nh}\setminus\{0\}}\norm{\uM^{1/2}\uAk^{-1}\uM\vv}_2\Big/\norm{\uM^{1/2}\vv}_2,
\end{equation*}
with $\norm{\cdot}_2$ denoting the Euclidean norm and $\uM$ denoting the mass matrix, cf.~\cref{sec:bemmat}. Recall that we are casting the discrete problem as described in \cref{rem:bemwithoutB}, thus justifying the right-multiplication of the resolvent operator by the mass matrix. By the change of variable $\vv\mapsto\uM^{-1/2}\vv$, we conclude that
\begin{equation}\label{eq:amplificationsvd}
    \phi(k)=\normsp{\uCk},\quad\text{given }\uCk=\uM^{1/2}\uAk^{-1}\uM^{1/2},
\end{equation}
with $\normsp{\cdot}$ representing the (Euclidean) spectral norm, i.e., the largest singular value of a matrix.

We note that, in practice, it is sometimes common to use a Cholesky factor of $\uM$ rather than its square root: given $\uM=\uRz^*\uRz$, one may define $\uCk=\uRz\uAk^{-1}\uRz^*$. Whether this is done or not is irrelevant here.

\begin{remark}\label{rem:amplificationwithoutB}
    If it is \emph{not} possible to ignore the operator $\calB{k}$ and the matrix $\uBk$, cf.~\cref{rem:RHSplanewave,rem:bemwithoutB}, one can define $\phi$ incorporating it: $\uCk=\uM^{1/2}\uAk^{-1}\uBk\uM^{-1/2}$.
    Since $\uBk$ is smooth with respect to $k$, our discussion below applies also in this case.
\end{remark}

By meromorphy of $\uAk^{-1}$, which follows from \cref{th:keldysh}, the matrix $\uCk$ is also meromorphic over $\bC$ with respect to $k$, with poles coinciding with those of $\uAk^{-1}$. As a consequence, $\phi$, by continuity of the spectral-norm operator, remains bounded over all compact subsets of $\bC\setminus\Lambda$, with $\Lambda\subset\bC$ being the set of all wavenumbers $k$ such that $\uAk$ is singular. However, this stops being true if one moves toward an eigenvalue, as the following proposition states.

\begin{proposition}\label{prop:asympt}
    Let $\lambda\in\bC$ be such that $\uA{\lambda}$ is singular, i.e., $\lambda\in\Lambda$. Let $m=m(\lambda)\in\bN$ be the length of the longest chain of generalized eigenvectors associated to $\lambda$, cf.~\cref{rem:jordanchain}. As $k\to\lambda$, there exist $\psi_\lambda>0$, depending only on the mass matrix $\uM$ and on the spectral projector $\textup{P}_{1,m}$, cf.~\cref{th:keldysh}, such that
    \begin{equation}\label{eq:asympt}
        \phi(k)=\psi_{\lambda}\abs{k-\lambda}^{-m}+\bigO{\abs{k-\lambda}^{1-m}}.
    \end{equation}
\end{proposition}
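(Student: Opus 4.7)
The plan is to apply Keldysh's theorem to $\uAk$ near $\lambda$, push the expansion through conjugation by $\uM^{1/2}$ to obtain a Laurent-type expansion of $\uCk$, and then use the reverse triangle inequality for the spectral norm to isolate the leading singular term.

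\textbf{Step 1: Laurent expansion of $\uCk$.} Choose a small closed disk $K\subset\bC$ centered at $\lambda$ containing no other eigenvalue, which is possible since $\Lambda$ has no finite accumulation points. Applying \cref{th:keldysh} to $\calL{k}=\uAk$ on $K$ gives
\begin{equation*}
    \uAk^{-1}=\sum_{j=1}^{m}\frac{\textup{P}_{1,j}}{(k-\lambda)^j}+\calH{k},
\end{equation*}
with $\textup{P}_{1,m}\neq 0$ and $\calH{k}$ holomorphic, hence bounded, on $K$. Left- and right-multiplying by $\uM^{1/2}$ gives, for $k\in K\setminus\{\lambda\}$,
\begin{equation*}
    \uCk=\frac{\uM^{1/2}\textup{P}_{1,m}\uM^{1/2}}{(k-\lambda)^m}+R(k),\qquad R(k):=\sum_{j=1}^{m-1}\frac{\uM^{1/2}\textup{P}_{1,j}\uM^{1/2}}{(k-\lambda)^j}+\uM^{1/2}\calH{k}\uM^{1/2}.
\end{equation*}
Since $\uM$ is SPD, $\uM^{1/2}$ is invertible, hence $\uM^{1/2}\textup{P}_{1,m}\uM^{1/2}\neq 0$ and we may set $\psi_\lambda:=\normsp{\uM^{1/2}\textup{P}_{1,m}\uM^{1/2}}>0$, a quantity depending only on $\textup{P}_{1,m}$ (with $\uM$ fixed by the discretization).

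\textbf{Step 2: Bound the remainder.} Since each of the finitely many summands in $R(k)$ is either holomorphic or behaves as $|k-\lambda|^{-j}$ with $j\leq m-1$, we immediately get $\normsp{R(k)}\leq C\abs{k-\lambda}^{1-m}$ for some constant $C$ and all $k$ in a suitably shrunken punctured neighborhood of $\lambda$.

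\textbf{Step 3: Isolate the leading term.} Apply the reverse triangle inequality for $\normsp{\cdot}$:
\begin{equation*}
    \bigl|\phi(k)-\psi_\lambda\abs{k-\lambda}^{-m}\bigr|=\bigl|\normsp{\uCk}-\abs{k-\lambda}^{-m}\normsp{\uM^{1/2}\textup{P}_{1,m}\uM^{1/2}}\bigr|\leq\normsp{R(k)}\leq C\abs{k-\lambda}^{1-m},
\end{equation*}
which is precisely \cref{eq:asympt}.

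The only mildly delicate point is the justification that $\psi_\lambda>0$; this reduces to the nonvanishing of $\textup{P}_{1,m}$ (guaranteed by \cref{th:keldysh}) together with the invertibility of $\uM^{1/2}$. All other steps are routine consequences of the expansion and standard norm inequalities, so no significant technical obstacle is anticipated.
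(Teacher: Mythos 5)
Your proof is correct and follows the same overall strategy as the paper's: apply Keldysh's theorem near $\lambda$, conjugate by $\uM^{1/2}$ to get the Laurent-type expansion of $\uCk$, and estimate norms. The one place you do something genuinely different — and, in fact, better — is Step 3. The paper's proof applies only the \emph{forward} triangle inequality, arriving at the chain $\phi(k)=\normsp{\uCk}\leq\psi_\lambda\abs{k-\lambda}^{-m}+\bigO{\abs{k-\lambda}^{1-m}}$, which by itself only yields an upper bound; but the statement \cref{eq:asympt} is a two-sided asymptotic (an ``$=$'' with a $\bigO{}$ remainder), so a matching lower bound is also required. Your use of the \emph{reverse} triangle inequality, $\bigl|\normsp{\uCk}-\normsp{\uM^{1/2}\textup{P}_{1,m}\uM^{1/2}}\abs{k-\lambda}^{-m}\bigr|\leq\normsp{R(k)}$, supplies exactly this two-sided control and closes that small gap. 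You also make explicit the reason $\psi_\lambda>0$ (invertibility of $\uM^{1/2}$ plus $\textup{P}_{1,m}\neq 0$), which the paper leaves implicit. In short: same route, but your version is slightly more careful and actually proves the stated equality rather than only an upper bound.
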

\begin{proof}
    Let $\uCk$ be defined as in \cref{eq:amplificationsvd}. \cref{th:keldysh} can be applied to $\uAk$ near $\lambda$. By \cref{th:keldysh}, we may choose a neighborhood $K$ small enough that $\lambda$ is the only pole of $\uAk^{-1}$ inside it. This leads to
    \begin{equation*}
        \uCk=\uM^{1/2}\uAk^{-1}\uM^{1/2}=\sum_{j=1}^m\frac{\uM^{1/2}\textup{P}_{1,j}\uM^{1/2}}{(k-\lambda)^j}+\uM^{1/2}\uHk\uM^{1/2},
    \end{equation*}
    which is valid over the punctured $K\setminus\{\lambda\}$. By the triangular inequality, we obtain
    \begin{align}
        \phi(k)=\normsp{\uCk}\leq&\normsp{\frac{\uM^{1/2}\textup{P}_{1,m}\uM^{1/2}}{(k-\lambda)^m}}+\normsp{\sum_{j=1}^{m-1}\frac{\uM^{1/2}\textup{P}_{1,j}\uM^{1/2}}{(k-\lambda)^j}+\uM^{1/2}\uHk\uM^{1/2}}\nonumber\\
        =&\frac{\normsp{\uM^{1/2}\textup{P}_{1,m}\uM^{1/2}}}{\abs{k-\lambda}^m}+\bigO{\abs{k-\lambda}^{1-m}},\label{eq:asymptC}
    \end{align}
    with the last step following from $\uHk=\bigO{1}$ by holomorphy of $\uHz$ over $K$. The claim then follows by setting $\psi_{\lambda}=\normsp{\uM^{1/2}\textup{P}_{1,m}\uM^{1/2}}$, which is strictly positive by \cref{th:keldysh}.
\end{proof}

\begin{remark}\label{rem:singlepole}
    Identifying the length $m(\lambda)$ of the longest chain of generalized eigenvectors associated to an eigenvalue $\lambda$ is generally a difficult problem. To simplify our work, we will assume that no non-trivial solution exists, so that all poles $\lambda$ have order $m(\lambda)=1$:
    \begin{equation}\label{eq:asymptsimp}
        \phi(k)=\frac{\psi_{\lambda}}{\abs{k-\lambda}}+\bigO{1}=\frac{\normsp{\uM^{1/2}\textup{P}_{1,1}\uM^{1/2}}}{\abs{k-\lambda}}+\bigO{1}\quad\textup{for }k\sim\lambda\in\Lambda.
    \end{equation}
    The above assumption is typical in practical settings, since, even if defective poles ($m(\lambda)>1$) exist, general perturbations of the scatterer $\Omega$ make the poles (semi-)simple again, recovering $m(\lambda)=1$ \cite{xiong_generic_2022}. From a ``numerical'' perspective, this means that round-off noise almost surely makes all poles' orders 1.
\end{remark}

More generally speaking, in cases where the spectrum $\Lambda$ includes defective eigenvalues ($m(\lambda)>1$), one may want to modify our upcoming techniques to account for the proper exponent $-m$ in \cref{eq:asympt}. Still, in all our numerical experiments, we have found no evidence to discard the assumption that $m(\lambda)=1$ for all $\lambda$. This fact, together with the intrinsic difficulties related to numerically dealing with defective eigenvalues (notably, identifying $m(\lambda)$), behooves us to leave the ``defective case'' as future research.

\begin{remark}\label{rem:nlevp}
    It is crucial to note that \cref{prop:asympt} provides a link between the field amplification function $\phi$ defined in \cref{eq:amplificationsvd} and the nonlinear eigenvalue problem \cref{eq:bdintformevp}. The singularities $\lambda_{i}$ of $\phi$, whose existence is proven by \cref{th:keldysh}, are located at all solutions to eigenproblem \cref{eq:bdintformevp}. However, solving the nonlinear eigenvalue problem is \emph{not enough} to characterize the field amplification. This is because of the additional smooth term $\uHk$, which affects $\phi$ but is invisible to the eigenvalue problem.
\end{remark}

The above results allow us to conclude that, although $\phi$ is bounded over any closed interval $K\subset\bR\setminus\{0\}$, if the strip of holomorphy of $\uCk$ is too narrow, cf.~\cref{eq:strip}, then almost-resonances will appear, in the form of ``spikes'' in $\phi$, cf.~\cref{eq:asympt,eq:asymptsimp}. This is confirmed by our numerical examples in \cref{sec:numexp}.

Now, recall that, in this work, we face the task of approximating $\phi$ over the frequency range $K$. To ease the approximation effort, it is crucial to identify the set of eigenvalues that limit the strip of holomorphy of $\uCk$, which we denote by $\Lambda_K$. Here we assume that $\Lambda_K$ contains the $M_K$ eigenvalues that lie closest to $K$, with $M_K$ a large enough number, roughly corresponding to how many local maxima (the above-mentioned ``spikes'') $\phi$ has in $K$.

By identifying the elements of $\Lambda_K$ and understanding their effect on $\phi$, one may circumvent some of the main issues due to the limited regularity of $\uCk$. In the next section, we describe how one can rely on rational approximation to achieve this, leveraging the meromorphic structure of the problem.

\section{Standard surrogate modeling}\label{sec:approx}
Evaluating the field amplification $\phi(k)$ is costly whenever the target scattering problem is complex enough to require a fine BEM discretization. Specifically, a direct computation of $\phi(k)$ by \cref{eq:amplificationsvd} generally follows the steps described in \cref{algo:direct}. Note that evaluating $\{\phi(k_i)\}_{i=1}^{\NK}$, with $\NK$ being the number of target wavenumbers, has computational cost $\bigO{\Nh^3\NK}$.

\begin{algorithm}[ht]
	\caption{Direct evaluation of $\{\phi(k_i)\}_{i=1}^{\NK}$}
	\begin{algorithmic}\label{algo:direct}
        \STATE{Assemble $\uM$ and compute $\uM^{1/2}$\hfill}\COMMENT{cost $\bigO{\Nh^3}$}
        \FOR{$k\in\{k_i\}_{i=1}^{\NK}$}
    		\STATE{Assemble the BE matrix $\uAk$\hfill}\COMMENT{cost $\bigO{\Nh^2}$}
    		\STATE{Compute $\uCk=\uM^{1/2}\uAk^{-1}\uM^{1/2}$\hfill}\COMMENT{cost $\bigO{\Nh^3}$}
            \STATE{Compute $\phi(k)$ by \cref{eq:amplificationsvd}\hfill}\COMMENT{cost $\bigO{\Nh^3}$}
            \LINECOMMENT{the cubic cost of the SVD may be lowered to quadratic-log by power iteration, cf.~\cref{sec:sub:sketch}}
        \ENDFOR
        \RETURN{$\{\phi(k_i)\}_{i=1}^{\NK}$}
	\end{algorithmic}
\end{algorithm}

We remark that this procedure is \emph{embarassingly parallel} with respect to $k$, i.e., the computations at each $k_i$ are independent from one another. As such, different $i$'s can be handled simultaneously by different computers, or by different cores in the context of parallel computing, without any communication overhead except for the final synthesis of all $\NK$ scalar results. Regardless, the cost of computing all field amplifications may still be too high even in a parallel setting, especially when $\Nh$ is large.

As a way to alleviate the computational burden, we employ \emph{surrogate modeling}. Specifically, we first construct a surrogate $\widetilde{\phi}:K\to\bR$, achieving good approximation quality: $\widetilde{\phi}\approx\phi$ over $K$. Then we replace the $\NK$ expensive evaluations of $\phi$ within \cref{algo:direct} with cheap evaluations of the surrogate $\widetilde\phi$.

\subsection{Rational approximation}\label{sec:sub:rat}
In order to justify rational approximation for field amplification, we take \cref{th:keldysh} as starting point. Therein, it is shown that the resolvent $\uAk^{-1}$ is a matrix-valued meromorphic function of the wavenumber $k$. As such, it is possible to \emph{efficiently} approximate it \emph{to arbitrary accuracy} via \emph{matrix-valued rational functions}, which, in this context, we define as the ratio of a matrix-valued polynomial and a scalar polynomial. For instance, using the \emph{barycentric} interpolatory rational form \cite{aaa}, we may approximate $\uCk$ as
\begin{equation}\label{eq:barymatrix}
    \uCk\approx\uLk:=\sum_{i=1}^{\nK}\frac{w_i\uC{\overline k_i}}{k-\overline k_i}\bigg/\sum_{i=1}^{\nK}\frac{w_i}{k-\overline k_i}.
\end{equation}
Barycentric rational interpolation has been extremely popular in recent years, thanks to its beneficial stability properties even for scattered (non-structured) sets of interpolation points $\{\overline k_i\}_{i=1}^{\nK}\subset\bC$, which makes it superior to many comparable approximation techniques. Note how the rational approximation consists of a linear combination of samples $\{\uC{\overline k_i}\}_{i=1}^{\nK}$ at arbitrary (distinct) locations $\{\overline k_i\}_{i=1}^{\nK}$, weighted by \emph{scalar}-valued rational functions
\begin{equation*}
    \left\{k\mapsto\frac{w_i}{k-\overline k_i}\bigg/\sum_{i'=1}^{\nK}\frac{w_{i'}}{k-\overline k_{i'}}\right\}_{i=1}^{\nK}.
\end{equation*}
One commonly refers to the scalar numbers $\{w_i\}_{i=1}^{\nK}\subset\bC$, which represent the degrees of freedom of the rational approximation, as \emph{barycentric weights}. Depending on the specific rational-approximation scheme, these are determined by solving a suitable data-fit or optimization problem.

Thanks to their meromorphic structure with respect to $k$, scattering problems have often been effectively dealt with by rational approximation in the surrogate-modeling community. See, e.g., \cite{hanke12,bonizzoni20,pradovera22,bruno24}. Even the extremely general ``reduced basis'' method \cite{chen10} (which fundamentally has no links to rational functions) sometimes yields a rational surrogate when applied to the Helmholtz equation.

\begin{algorithm}[ht]
	\caption{Approximation of $\{\phi(k_i)\}_{i=1}^{\NK}$ by rational surrogate modeling}
	\begin{algorithmic}\label{algo:rat}
        \STATE{Compute $\uM^{1/2}$\hfill}\COMMENT{cost $\bigO{\Nh^3}$}
        \STATE{Select $\nK$ sample points $\{\overline k_i\}_{i=1}^{\nK}\subset\bC$, e.g., adaptively \cite{pradovera22,pradovera23,aaacontinuum}}
        \FOR{$\overline k\in\{\overline k_i\}_{i=1}^{\nK}$}
    		\STATE{Compute sample $\uC{\overline k}=\uM^{1/2}\uA{\overline k}^{-1}\uM^{1/2}$\hfill}\COMMENT{cost $\bigO{\Nh^3}$}
        \ENDFOR
        \STATE{Get the barycentric weights $\{w_i\}_{i=1}^{\nK}$ by solving a data-fit problem\hfill}\COMMENT{cost $\bigO{\Nh^2\nK^3}$, cf.~\cref{rem:baryweights}}
        \STATE{Assemble the rational surrogate $\uLz$ as in \cref{eq:barymatrix}}
        \FOR{$k\in\{k_i\}_{i=1}^{\NK}$}
    		\STATE{Evaluate $\uLk$\hfill}\COMMENT{cost $\bigO{\Nh^2\nK}$}
            \STATE{Compute $\widetilde\phi(k)$ as largest singular value of $\uLk$\hfill}\COMMENT{cost $\bigO{\Nh^3}$}
        \ENDFOR
        \RETURN{$\{\widetilde\phi(k_i)\}_{i=1}^{\NK}$}
	\end{algorithmic}
\end{algorithm}

A computational pipeline for rational-interpolation-based approximation of $\phi$ may be as in \cref{algo:rat}. Therein, we assume that a strategy for selecting the number and locations of the sample points $\{\overline k_i\}_{i=1}^{\nK}$ is available. For instance, one may use strategies for adaptive sampling, with the algorithm automatically selecting the sample locations in a (quasi-)optimal way so as to attain a desired approximation accuracy. These have been quite popular in recent years \cite{aaa,pradovera22,pradovera23,bruno24,aaacontinuum}. In such approaches, the number $\nK$ of samples is automatically selected in a data-driven way, based on the observed features of the target $\uCk$.

Standard results in rational approximation \cite{walsh60,baker96} suggest that $\nK>M_K$, with $M_K$ being the number of poles of $\uCk$ close to $K$, cf.~\cref{sec:sub:field}, is necessary for an acceptable approximation accuracy. At the same time, one expects an adaptive sampling routine to be parsimonious, i.e., that not so many expensive samples are taken. We will merge these properties into the condition $\nK\sim M_K$.

\begin{remark}\label{rem:baryweights}
    In \cref{algo:rat}, one needs to compute the barycentric weights $\{w_i\}_{i=1}^{\nK}$. How this is done depends on which rational-approximation method was chosen. Generally, this step consists of solving a suitable optimization problem. For instance, in set-valued AAA \cite{lietaert22} and minimal rational interpolation \cite{bonizzoni23}, one finds the barycentric weights by solving a certain least-squares problem involving a Loewner-like matrix of size $\bigO{\Nh^2\nK}\times \nK$. Note, in particular, how the number of rows of the Loewner matrix scales with $\Nh^2$ as a consequence of the unraveling of the samples from $(2\Nh)\times(2\Nh)$ matrices into vectors of size $4\Nh^2$. The cost of solving this least-squares problem is $\bigO{\Nh^2\nK^3}$, the scaling which is reported in \cref{algo:rat} as reference. More advanced strategies based on QR factorization may reduce this cost to $\bigO{\Nh^2\nK^2}$ \cite{bonizzoni23}.
\end{remark}

In terms of algorithmic efficiency, it is crucial to assume that $\nK\ll \NK$. Indeed, if this is not the case, a surrogate model approximating $\phi$ over the target wavenumbers $\{k_i\}_{i=1}^{\NK}$ is just as complex as $\phi$ itself. As such, rational approximation loses its effectiveness, and a direct evaluation of $\{\phi(k_i)\}_{i=1}^{\NK}$ as in \cref{algo:direct} is recommended instead.

Moreover, for simplicity, from now on we will assume that $\nK\ll \Nh$, i.e., $M_K\ll \Nh$. In practical settings, this is a reasonable assumption if the ``spatial'' complexity of the scattering problem dominates the ``spectral'' complexity of the problem.

\begin{remark}\label{rem:spectralassumption}
    Our assumption that $M_K\ll \Nh$ is often true in practice, especially in cases where a high spatial resolution is needed for good accuracy of the BEM discretization, e.g., if the scatterer has a complicated geometry. Still, $M_K\gtrsim \Nh$ may happen if the problem has an extremely complex spectrum, cf.~the numerical results in \cref{sec:numexp}. In that case, rational approximation is less beneficial at reducing the complexity of computing $\phi$, since $\nK>M_K\gtrsim \Nh$. Specifically, sampling becomes the main bottleneck of any method for approximating $\phi$ in this case.
\end{remark}

Under the above assumptions, \cref{algo:rat} costs $\bigO{\Nh^3\nK+\Nh^2\nK^3+\Nh^3\NK}$, with the surrogate-evaluation cost being $c_{\textup{surrogate}}=\bigO{\Nh^3}$, which is still the same as in \cref{algo:direct}. The bottleneck in the surrogate evaluation is the computation of the largest singular value of $\uLk$. To achieve that, one may replace the cubic-cost SVD with a small number $q=\bigO{\log \Nh}$ of power-method iterations, achieving $\bigO{\Nh^2\log \Nh}$ cost, cf.~\cref{sec:sub:sketch}. In this way, one can decrease the surrogate-evaluation cost to $c_{\textup{surrogate}}=\bigO{\Nh^2(\nK+\log \Nh)}$ and the total complexity to $\bigO{\Nh^3\nK+\Nh^2\nK^3+\Nh^2(\nK+\log \Nh)\NK)}$. This is generally an improvement over the direct evaluation of $\phi$, although the algorithm has a super-quadratic-in-$\Nh$ surrogate-evaluation cost. An at-least-quadratic cost is unavoidable here, since we are approximating all entries of the $(2\Nh)\times(2\Nh)$ matrix $\uCk$. This can be improved, as we proceed to describe.

\subsection{Compression by randomized power method}\label{sec:sub:sketch}

The randomized power method (RPM) falls within the family of \emph{randomized} methods in computational linear algebra, where randomness is leveraged to reduce the computational cost of linear-algebra operations, e.g., matrix compression \cite{halko11,woolfe08} and approximating vector norms \cite{dixon83,martinsson20}. In randomized linear algebra, a large number of operations on deterministic vectors is replaced with a smaller number of operations on random vectors, introducing a small but nonzero probability of failure in the process.

The RPM can be applied in the specific case of spectral-norm approximation. 
Consider a matrix $\uCz\in\bC^{N\times N}$, a random vector $\vb\in\bC^N$, e.g., with i.i.d.~normal Gaussian entries, and an arbitrary number of iterations $q\geq0$. We can approximate
\begin{equation}\label{eq:sketch}
    \normsp{\uCz}\approx\norm{\uCz}_{\textup{sp},\textup{RPM},q}:=\frac{\norm{\uCz(\uCz^*\uCz)^q\vb}}{\norm{(\uCz^*\uCz)^q\vb}}.
\end{equation}
This is simply the square root of the estimate for the largest eigenvalue of $\uCz^*\uCz$ that one gets after $q$ iterations of the well-known power method with starting vector $\vb$.

Standard results in basic computational linear algebra show the convergence of $\norm{\uCz}_{\textup{sp},\textup{RPM},q}$ to $\normsp{\uCz}$ as $q\to\infty$. 
%
In general, a minimum number of iterations $q\sim\log N$ (the so-called ``burn-in period'') is needed to guarantee a good approximation accuracy \cite{simchowitz17,martinsson20}. 
For simplicity, from here onward, we use the above result to prescribe the scaling $q\sim\log N$, so that the cost of computing $\norm{\uCz}_{\textup{sp},\textup{RPM},q}$ is $\bigO{N^2\log N}$.


As previously suggested, a straightforward application of RPM can speed up the spectral norm evaluations in \cref{algo:direct,algo:rat}, since its $\bigO{\Nh^2\log \Nh}$ cost is an improvement over the $\bigO{\Nh^3}$ cost of SVD.

However, we can employ RPM in a different way, leading to further computational savings. The main idea, rather than building a rational approximation of a matrix and then applying the RPM to it, is to apply the RPM first and then build rational approximations of the two vectors appearing in the RPM approximation, cf.~\cref{eq:sketch}. More rigorously, one may construct two rational functions $\vell_{0}$ and $\vell_{1}$, approximating the two $\bC^{2\Nh}$-valued functions
\begin{equation}\label{eq:barysketched}
    \uCk^p\left(\uCk^*\uCk\right)^q\vbh\approx\vell_{p}(k):=\sum_{i=1}^{\hatnK}\frac{w_{p,i}\uC{\overline k_i}^p\left(\uC{\overline k_i}^*\uC{\overline k_i}\right)^q\vbh}{k-\overline k_i}\bigg/\sum_{i=1}^{\hatnK}\frac{w_{p,i}}{k-\overline k_i},
\end{equation}
for $p\in\{0,1\}$. This leads to the procedure outlined in \cref{algo:sketch}.

\begin{algorithm}[ht]
	\caption{Approximation of $\{\phi(k_i)\}_{i=1}^{\NK}$ by RPM and rational surrogate modeling}
	\begin{algorithmic}\label{algo:sketch}
        \STATE{Compute $\uM^{1/2}$\hfill}\COMMENT{cost $\bigO{\Nh^3}$}
        \STATE{Draw the random vector $\vbh\in\bC^{2\Nh}$ with i.i.d.~Gaussian entries}
        \STATE{Set an appropriate number of iterations $q$, e.g., $q\sim\log \Nh$}
        \STATE{Select $\hatnK$ sample points $\{\overline k_i\}_{i=1}^{\hatnK}\subset\bC$, e.g., adaptively \cite{pradovera22,pradovera23,aaacontinuum}}
        \FOR{$\overline k\in\{\overline k_i\}_{i=1}^{\hatnK}$}
    		\STATE{Compute $\uC{\overline k}=\uM^{1/2}\uA{\overline k}^{-1}\uM^{1/2}$\hfill}\COMMENT{cost $\bigO{\Nh^3}$}
            \STATE{Get samples $\left(\uC{\overline k}^*\uC{\overline k}\right)^q\vbh$ and $\uC{\overline k}\left(\uC{\overline k}^*\uC{\overline k}\right)^q\vbh$\hfill}\COMMENT{cost $\bigO{q\Nh^2}$}
        \ENDFOR
        \STATE{Get the weights $\{w_{p,i}\}_{p=0,i=1}^{1,\hatnK}$ by solving two data-fit problems\hfill}\COMMENT{cost $\bigO{\Nh\hatnK^3}$, cf.~\cref{rem:sketchedweights}}
        \STATE{Assemble the rational surrogates $\{k\mapsto\vell_{p}(k)\}_{p=0}^1$ as in \cref{eq:barysketched}}
        \FOR{$k\in\{k_i\}_{i=1}^{\NK}$}
            \STATE{Evaluate surrogates $\{\vell_{p}(k)\}_{p=0}^{1}$ and compute $\widetilde\phi(k)=\norm{\vell_{1}(k)}/\norm{\vell_{0}(k)}$\hfill}\COMMENT{cost $\bigO{\hatnK^2}$, cf.~\cref{rem:surrogatesketch}}
        \ENDFOR
        \RETURN{$\{\widetilde\phi(k_i)\}_{i=1}^{\NK}$}
	\end{algorithmic}
\end{algorithm}

\begin{remark}\label{rem:sketchedweights}
    Thanks to the reduced size of each rational function's output space, which is now $2\Nh$ (a BE vector) rather than $4\Nh^2$ (a BE matrix), computing the barycentric weights has cost $\bigO{\Nh\hatnK^3}$, cf.~\cref{rem:baryweights}.
\end{remark}

\begin{remark}\label{rem:surrogatesketch}
    A naive evaluation of $\widetilde\phi(k)=\norm{\vell_{1}(k)}/\norm{\vell_{0}(k)}$ at each target point costs $c_{\textup{surrogate}}=\bigO{\Nh\hatnK}$, since this is the complexity associated to evaluating the two size-$(2\Nh)$ rational functions $\{\vell_{p}(k)\}_{p=0}^1$. However, this cost can be made $\Nh$-independent by pre-computing certain inner products of samples. Specifically, given
    \begin{equation*}
        \alpha_{p,i,j}:=\left(\uC{\overline k_i}^p\left(\uC{\overline k_i}^*\uC{\overline k_i}\right)^q\vbh\right)^*\left(\uC{\overline k_j}^p\left(\uC{\overline k_j}^*\uC{\overline k_j}\right)^q\vbh\right)
    \end{equation*}
    for $p\in\{0,1\}$ and $i,j=1,\ldots,\hatnK$, one has the expansion
    \begin{equation*}
        \widetilde\phi(k)^2=\frac{\norm{\vell_{1}(k)}^2}{\norm{\vell_{0}(k)}^2}=\frac{\sum_{i,j=1}^{\hatnK}\frac{w_{1,i}^*w_{1,j}\alpha_{1,i,j}}{(k-\overline k_i)(k-\overline k_j)^*}}{\sum_{i,j=1}^{\hatnK}\frac{w_{1,i}^*w_{1,j}}{(k-\overline k_i)(k-\overline k_j)^*}}\cdot\frac{\sum_{i,j=1}^{\hatnK}\frac{w_{0,i}^*w_{0,j}}{(k-\overline k_i)(k-\overline k_j)^*}}{\sum_{i,j=1}^{\hatnK}\frac{w_{0,i}^*w_{0,j}\alpha_{0,i,j}}{(k-\overline k_i)(k-\overline k_j)^*}},
    \end{equation*}
    whose evaluation has cost $c_{\textup{surrogate}}=\bigO{\hatnK^2}$.
\end{remark}

Assume that the number $\hatnK$ of samples needed to build the two rational surrogates\footnote{For simplicity of notation, we are assuming that the same sample points are used for both rational surrogates $\vell_{0}(k)$ and $\vell_{1}(k)$. In practice, we even allow the two surrogates to be built using different numbers of samples at different locations.} satisfies $\hatnK\ll \Nh$, cf.~\cref{rem:spectralassumption}. Following the above remarks, one can make the complexity of \cref{algo:sketch} equal $\bigO{\Nh^3\hatnK+\hatnK^2\NK}$, with $c_{\textup{surrogate}}=\bigO{\hatnK^2}$. As such, we can see that \cref{algo:sketch} is an improvement over \cref{algo:rat} as long as the number of samples needed in the two algorithms is similar ($\nK\approx\hatnK$).

Unfortunately, this is not the case, since the two algorithms apply rational approximation to different functions. Specifically, \cref{algo:rat} has the ``easy'' task of approximating the \emph{meromorphic} function $k\mapsto\uCk$, cf.~\cref{th:keldysh}. On the other hand, \cref{algo:sketch} deals with the approximation of\footnote{For the sake of conciseness, we are assuming simple poles.}
\begin{equation}\label{eq:nonmero}
    \uCk^p\left(\uCk^*\uCk\right)^q\vbh
    =\sum_{i=1}^{M_K}\frac{\big(\uM^{1/2}\textup{P}_{i,1}\uM^{1/2}\big)^p\big(\uM^{1/2}\textup{P}_{i,1}^*\uM\textup{P}_{i,1}\uM^{1/2}\big)^q\vbh}{(k-\lambda_{i})^p\abs{k-\lambda_{i}}^{2q}}+\bigO{\abs{k-\lambda_{i}}^{p+2q-1}},
\end{equation}
which, for $q>0$, \emph{is not meromorphic anymore}, since complex conjugation does not preserve holomorphy. The lack of meromorphy impacts the \emph{effectiveness} of rational approximation: the number $\hatnK$ of samples needed to attain an ``acceptable'' approximation accuracy is generally much larger than $(2qM_K)$, which a meromorphic function of a similar form would require. Our numerical experiments in \cref{sec:numexp} confirm this.

\begin{remark}\label{rem:nonmero}
    Roughly speaking, \cref{eq:nonmero} shows that the effectiveness of rational approximation within \cref{algo:sketch} decreases as $q$ increases. If no iterations are performed ($q=0$), only the \emph{meromorphic} function
    \begin{equation*}
        k\mapsto\uCk\vbh=\sum_{i=1}^{M_K}\frac{\uM^{1/2}\textup{P}_{i,1}\uM^{1/2}\vbh}{k-\lambda_{i}}+\uM^{1/2}\uHk\uM^{1/2}\vbh,
    \end{equation*}
    with $\vbh$ a random vector, needs to be approximated within \cref{algo:sketch}. This can be done with the same ease as approximating $\uCk$ directly, cf.~\cref{algo:rat}.
\end{remark}
    
In summary, there are no theoretical grounds for concluding that \cref{algo:sketch} might actually perform better than \cref{algo:rat}, in terms of surrogate assembly and evaluation.

\section{Hybrid surrogate modeling}\label{sec:hybrid}
In the previous sections, we have discussed two surrogate-modeling-based methods for approximating $\phi$, which follow rigorous function-approximation and computational-linear-algebra theory. In this section, we describe an alternative approach, which approximates $\phi$ following more heuristic arguments. Thanks to some drastic simplifications, the upcoming ``hybrid'' method, described in \cref{sec:sub:hybrid} below, is extremely efficient. Indeed, it builds a surrogate model using roughly as few samples as \cref{algo:rat} (hence, at roughly the same cost), with an extremely low surrogate-evaluation cost $c_{\textup{surrogate}}=\bigO{\nK}$.

However, the heuristic step that leads to an enhanced efficiency also causes a decrease in accuracy. Specifically, although our upcoming method provides an approximation that converges to the exact $\phi$, cf.~\cref{prop:conv}, such convergence is slow. For this reason, it is more useful to think of our hybrid approach as building a ``rough'' approximation of $\phi$. What still justifies our interest in such method is that (i) the approximation retains good accuracy near almost-resonating wavenumbers 
and (ii) a higher accuracy can still be achieved if one is willing to pay a higher (but still competitive) cost for building the surrogate.

In order to derive our hybrid algorithm, we start from \cref{rem:singlepole} and apply \cref{eq:asymptsimp} near all troublesome poles within $\Lambda_K$. Ignoring bounded $\bigO{1}$-terms, we obtain
\begin{equation}\label{eq:asmatrix}
    \phi(k)=\sup_{\norm{\vv}_2=1}\norm{\uCk\vv}_2\approx\sup_{\norm{\vv}_2=1}\norm{\sum_{i=1}^{M_K}\frac{\uM^{1/2}\textup{P}_{i,1}\uM^{1/2}\vv}{k-\lambda_{i}}}_2,
\end{equation}
with $\{\lambda_{i}\}_{i=1}^{M_K}$ being an enumeration of $\Lambda_K$. Note, in particular, that, since the eigenspaces are not pairwise orthogonal \cite{betcke14}, we cannot simplify the above expression through the Pythagorean theorem.

Moreover, we observe that, in order to use \cref{eq:asmatrix} directly, we need to compute the (oblique) spectral projectors $\textup{P}_{i,1}$. This not only is computationally expensive, but also requires taking samples of $\uC{\cdot}$ at non-purely-real wavenumbers, which is impossible in our framework. As such, we propose to avoid handling such projectors, by looking at a more manageable \emph{scalar} expression, obtained by applying the triangular inequality:
\begin{equation}\label{eq:assum}
    \phi(k)\lessapprox\widetilde\phi_{\textup{sum},0}(k):=\sum_{i=1}^{M_K}\sup_{\norm{\vv}_2=1}\norm{\frac{\uM^{1/2}\textup{P}_{i,1}\uM^{1/2}\vv}{k-\lambda_{i}}}_2=\sum_{\lambda\in\Lambda_K}\frac{\psi_{\lambda}}{\abs{k-\lambda}},
\end{equation}
recalling the definition $\psi_{\lambda_i}=\normsp{\uM^{1/2}\textup{P}_{i,1}\uM^{1/2}}$.

At this point, one might note that \cref{eq:assum} does not avoid the computation of the spectral projectors $\textup{P}_{i,1}$, since they appear in the definitions of the scalar quantities $\psi_{\lambda_{i}}$. Moreover, the approximation relies on the unknown eigenvalues $\Lambda_K$. We now describe how these issues can be circumvented, through a combination of rational approximation and linear algebra, by approximating the set $\Lambda_K$ (\cref{sec:hybrid:poles}) and by fitting the expansion coefficients $\psi_{\lambda_{i}}$ from data (\cref{sec:sub:fit}).

\subsection{Identifying the poles}\label{sec:hybrid:poles}
Approximating the eigenvalues of a nonlinear eigenproblem is a common task in computational linear algebra. Solvers based on contour integration \cite{beyn12} or (local) linearizations \cite{guttel17} are quite popular. Unfortunately, linearizations are not very effective for our problem, due to the extremely nonlinear (in $k$) nature of the BE matrices. Moreover, contour integration is unavailable since, by assumption, we cannot sample the BE matrices at non-real wavenumbers. As a feasible alternative, we use rational approximation once more, similarly to \cite{pradoveraEVP23}. Specifically, we start by computing a rational approximation of either the BE matrix $\uLk\approx\uCk$, a one-sided sketch $\vell(k)\approx\uCk\vbh$, with $\vbh$ being a random vector, or a two-sided sketch $\zeta(k)\approx\mathbf{b}_{1}^*\uCk\mathbf{b}_{2}$, with $\mathbf{b}_{1}$ and $\mathbf{b}_{2}$ being random vectors. Then we find an approximation of $\Lambda_K$ by computing the set of poles of $\uLk$, $\vell(k)$, or $\zeta(k)$. Crucially, such pole-finding requires non-real computations of the rational approximation only, without involving any evaluation of the BE matrices at non-real wavenumbers.

In summary, given $\nK$ samples, in this way we can approximate $\Lambda_K$ with complexity $\bigO{\Nh^3\nK+\Nh^\eta \nK^3}$, with $\eta\in\{0,1,2\}$ depending on what kind of sketching is used, if any. Two-sided sketching yields the most efficient algorithm ($\eta=0$) and is the alternative chosen, e.g., in \cite{bruno24}. However, in our tests, we use one-sided sketching ($\eta=1$) because of two reasons:
\begin{itemize}
    \item it has lower cost than the non-sketched option ($\eta=2$) at (empirically) no accuracy reduction;
    \item the vector-valued nature of $k\mapsto\vell(k)\in\bC^{2\Nh}$ affords us the convenience of using the \emph{adaptive} rational-approximation tools from \cite{bonizzoni23,pradovera23}, which cannot be applied to the scalar-valued $k\mapsto\zeta(k)$.
\end{itemize}

\begin{remark}\label{rem:spurious}
    In using rational approximation for pole-finding, one can fall in a common pitfall, related to the appearance of so-called ``spurious poles''. Spurious poles are poles of the rational approximation that do not correspond to any pole of the original function. These may appear even in exact arithmetic \cite[Chapter 6.1]{baker96}, and are automatically placed by the rational-approximation algorithm at locations that are ``convenient'' to improve the approximation quality. As it so often happens, round-off noise exacerbates this issue in numerical settings. When identifying $\Lambda_K$ by the set of poles of a rational approximation, one would ideally want to filter out any spurious poles.
\end{remark}

Unfortunately, the spurious-pole filtering suggested by the above remark is one of the hardest tasks in rational approximation. Some (more or less) heuristic strategies for this exist \cite{aaa,bruno24}, flagging poles as spurious based on few additional samples of the target function. Unfortunately for our purposes, such additional samples must be placed near the poles. Due to the complex nature of the poles, cf.~\cref{sec:spectral}, this is impossible in our setting, where evaluations of the BE matrices are constrained to the real axis. There are also other techniques that flag poles as spurious if the corresponding \emph{approximate residue}, i.e., the residue in a Heaviside expansion of the rational surrogate, is small. However, we were not able to apply these ideas effectively in our experiments, since we have often found spurious residues with even larger magnitude than non-spurious ones. For these reasons, in our algorithm we choose to approximate $\Lambda_K$ either by the set of \emph{all} surrogate poles
\begin{equation*}
    \widetilde\Lambda_K^{\textup{all}}:=\left\{\lambda\in\bC:\sum_{i=1}^{\nK}\frac{w_i}{\lambda-\overline k_i}=0\right\},
\end{equation*}
cf.~\cref{eq:barymatrix}, or by the subset of all those surrogate poles that are closest to some $k\in\{k_i\}_{i=1}^{\NK}$, i.e.,
\begin{equation}\label{eq:polefilter}
    \widetilde\Lambda_K^{\textup{close}}:=\left\{\lambda\in\widetilde\Lambda_K^{\textup{all}}:\exists k\in\{k_i\}_{i=1}^{\NK}\textup{ such that }\abs{\lambda-k}\leq\abs{\lambda'-k}\forall\lambda'\in\widetilde\Lambda_K^{\textup{all}}\right\}.
\end{equation}
The rationale behind taking the subset $\widetilde\Lambda_K^{\textup{close}}\subset\widetilde\Lambda_K^{\textup{all}}$, rather than $\widetilde\Lambda_K^{\textup{all}}$ itself, is that the latter set often includes a certain amount of spurious poles, located \emph{at some distance} from $K$. Indeed, there are theoretical grounds for expecting at least some of the spurious poles to cluster on a relatively large Bernstein ellipse around the sampling interval $K$ \cite{trefethen23}. Numerical evidence for this, as well as a discussion of the effects that spurious poles have on the resulting approximation of $\phi$, are included in \cref{sec:numexp}.

\subsection{Assembling the approximate field amplification}\label{sec:sub:fit}
Now, assume that $\Lambda_K$ has been well approximated as $\widetilde\Lambda_K=\{\lambda_{i}\}_{i=1}^{M_K'}$ by the above rational-approximation-based strategy, where, with an abuse of notation, we are denoting the surrogate poles as $\lambda_{i}$. The number of approximate eigenvalues satisfies $M_K'\leq \nK-1$ and should ideally be close to $M_K$. We face the task of computing the coefficients $\psi_{\lambda}$ in the approximate expansion \cref{eq:assum}, with $\widetilde\Lambda_K$ replacing $\Lambda_K$. To this aim, we choose to use the following approach based on \emph{collocation}.

We start by observing that the coefficients $\{\psi_{\lambda}\}_{\lambda\in\widetilde\Lambda_K}$ can be interpreted as the degrees of freedom appearing in an approximation of $\phi$ that uses a linear combination of the ``basis functions'' $$\left\{\bR\ni k\mapsto\abs{k-\lambda}^{-1}=\left((k-\real\lambda)^2+(\imag\lambda)^2\right)^{-1/2}\in\bR\right\}_{\lambda\in\widetilde\Lambda_K}.$$ As long as the approximate poles are pairwise distinct and pairwise not complex conjugate, the above basis functions are linearly independent. In this setting, any set of $M_K'$ distinct \emph{collocation points} $\{\widetilde k_i\}_{i=1}^{M_K'}\subset\bR$ is ``unisolvent'' with this basis, in the sense that the data-fit problem
\begin{equation}\label{eq:assumfit}
    \textup{find }\{\widetilde\psi_{\lambda}\}_{\lambda\in\widetilde\Lambda_K}\textup{ such that }\phi(\widetilde k_i)=\sum_{\lambda\in\widetilde\Lambda_K}\frac{\widetilde\psi_{\lambda}}{\abs{\widetilde k_i-\lambda}}\quad\textup{for all }i=1,\ldots,M_K'
\end{equation}
admits a unique solution. As such, it is possible to find approximations $\widetilde\psi_{\lambda}\approx\psi_{\lambda}$ by sampling the exact $\phi$ at the $M_K'$ distinct points $\{\widetilde k_i\}_{i=1}^{M_K'}$, and then solving \cref{eq:assumfit}. In this way, one can sidestep the complicated projectors $\textup{P}_{i,1}$ from \cref{eq:assum}.

\begin{remark}
    Since the approximate poles are automatically computed as described in \cref{sec:hybrid:poles}, it may happen that two of them may coincide or be complex conjugate, thus making \cref{eq:assumfit} ill-posed due to repeated basis functions. This issue is easily solved by pre-processing $\widetilde\Lambda_K$, removing a copy of each repeated or complex-conjugate approximate pole. Note, in particular, that repeated poles might be a symptom of defects in the problem's spectrum, cf.~\cref{rem:singlepole} and ensuing paragraph.
\end{remark}

Leveraging common ideas from the radial-basis literature, one may improve both approximation accuracy and numerical stability by placing the collocation points at the locations where the basis functions attain their respective maxima. See, e.g., \cite{wendland04}. In our setting, this happens at $\widetilde k_i:=\real\lambda_{i}$ for $i=1,\ldots,M_K'$. If any real parts of the approximate poles repeat, one should nudge any repeated collocation points by a small (real) amount.

Also stemming from the radial-basis literature is the idea of enriching the radial basis with some additional basis functions, e.g., polynomials up to a certain (low) degree. To account for the increased basis size, one must either add linear constraints or include additional samples. Although the former approach is more efficient, we choose to follow the latter approach since the additional samples help us achieve a higher accuracy. We introduce $n_o\geq0$ additional samples\footnote{The subscript ``o'' stands for ``oversampling''.} $\{\widetilde k_{i+M_K'}\}_{i=1}^{n_o}\subset\bR$, e.g., at uniformly spaced locations over $K$. Then we fix an arbitrary basis $\{\psi_j\}_{j=1}^{n_b}$, with $0\leq n_b\leq n_o$, and we construct the approximation
\begin{equation}\label{eq:assumpoly}
    \widetilde\phi_{\textup{sum},n_b}(k)=\sum_{\lambda\in\widetilde\Lambda_K}\frac{\widetilde\psi_{\lambda}}{\abs{k-\lambda}}+\sum_{j=1}^{n_b}\alpha_j\psi_j(k).
\end{equation}
The expansion coefficients can be easily found by solving the data-fit problem
\begin{equation}\label{eq:assumfitpoly}
    \phi(\widetilde k_i)=\widetilde\phi_{\textup{sum},n_b}(\widetilde k_i)\quad\textup{for all }i=1,\ldots,M_K'+n_o,
\end{equation}
either exactly (if $n_b=n_o$) or in a least-squares sense (if $n_b<n_o$).

While the accuracy of the basic approximation $\widetilde\phi_{\textup{sum},0}$ hinges on the accuracy of the approximate expansion \cref{eq:assum}, the addition of extra terms can aid in overcoming this limitation. Indeed, since the coefficients of $\widetilde\phi_{\textup{sum},n_b}$ are found by fitting \emph{exact} values of $\phi$, the extra terms can reduce the ``modeling mismatch'' incurred in the scalar heuristic approximation \cref{eq:assum}. Based on this, we can conclude that $\widetilde\phi_{\textup{sum},n_b}$ \emph{converges} to the exact $\phi$ as $n_b\to\infty$, in the following sense.

\begin{proposition}\label{prop:conv}
    Let $K\subset\bR\setminus\{0\}$ be compact. Let $\{\psi_j\}_{j=1}^\infty$ be a sequence of functions $\psi_j:K\to\bR$, and assume that any continuous positive function can be approximated by the family $\{\psi_j\}_{j=1}^\infty$ to arbitrary precision in the $L^\infty(K)$-norm:
    \begin{equation}\label{eq:conv}
        \forall\mu\in C(K;\bR_{>0})\quad\lim_{n_b\to\infty}\inf_{\bm\alpha\in\bR^{n_b}}\sup_{k\in K}\abs{\mu(k)-\sum_{j=1}^{n_b}\alpha_j\psi_j(k)}=0.
    \end{equation}
    Then, if $\widetilde\Lambda_K\cap K=\emptyset$, one may choose the coefficients $\{\alpha_j\}_{j=1}^{n_b}$ in \cref{eq:assumpoly} so that $\widetilde\phi_{\textup{sum},n_b}$ converges to $\phi$ in the $L^\infty(K)$-norm as $n_b\to\infty$.
\end{proposition}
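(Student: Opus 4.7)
The plan is to decompose $\widetilde\phi_{\textup{sum},n_b}$ as the sum of a fixed ``radial part'' $R(k) := \sum_{\lambda \in \widetilde\Lambda_K} \widetilde\psi_\lambda / \abs{k-\lambda}$ and a variable ``enrichment part'' $\sum_{j=1}^{n_b} \alpha_j \psi_j(k)$, and then to absorb $R$ into the residual $\mu := \phi - R$. The claim then reduces to: for the continuous function $\mu$ on $K$, there exist coefficients $\{\alpha_j\}_{j=1}^{n_b}$ such that $\sum_{j=1}^{n_b} \alpha_j \psi_j \to \mu$ in the $L^\infty(K)$-norm. This is essentially the density hypothesis \eqref{eq:conv}, once a mild positivity issue is resolved.

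First I would verify that $\mu \in C(K; \bR)$. Since $K \subset \bR \setminus \{0\}$, the BE matrix $\uAk$ is invertible throughout $K$, so $\uCk$ is holomorphic (hence continuous) in $k$ there; composition with $\normsp{\cdot}$ yields continuity of $\phi$ on $K$. Continuity of $R$ on $K$ is immediate from the standing hypothesis $\widetilde\Lambda_K \cap K = \emptyset$: each denominator $\abs{k - \lambda}$ is bounded away from zero uniformly on the compact $K$, and the sum is finite. Subtracting, $\mu \in C(K; \bR)$.

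The next step is to upgrade \eqref{eq:conv} from the cone of strictly positive continuous functions to all of $C(K; \bR)$, since $\mu$ may change sign. Given $\mu \in C(K; \bR)$ and $\varepsilon > 0$, I pick a constant $C > \norm{\mu}_{L^\infty(K)}$, so that both $k \mapsto \mu(k) + C$ and $k \mapsto C$ are continuous and strictly positive on $K$. By \eqref{eq:conv} each of them is uniformly $\varepsilon/2$-approximable by a finite linear combination of the $\psi_j$; subtracting these two combinations (and collecting coefficients, padding with zeros so that both combinations use the same $n_b$) produces a single linear combination that approximates $\mu$ within $\varepsilon$ in $L^\infty(K)$.

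Applying this extension to our specific $\mu$ and letting $\varepsilon \to 0$, I obtain a sequence of coefficient choices $\{\alpha_j\}_{j=1}^{n_b}$ (with $n_b \to \infty$) such that $\sup_{k \in K} \abs{\mu(k) - \sum_{j=1}^{n_b} \alpha_j \psi_j(k)} \to 0$. Adding $R$ back to both sides gives the uniform convergence $\widetilde\phi_{\textup{sum}, n_b} \to R + \mu = \phi$ on $K$, as claimed. No serious obstacle arises; the only mildly non-routine step is the positive-to-signed extension of the density hypothesis, which is a standard subtraction trick.
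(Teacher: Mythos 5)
Your proof follows the same route as the paper: reduce the claim to uniformly approximating the residual $\mu = \phi - \widetilde\phi_{\textup{sum},0}$ by the enrichment family, using that $\phi$ and $\widetilde\phi_{\textup{sum},0}$ are continuous on $K$ (the former because $\uAk$ is invertible and continuous over the real interval $K$, the latter because $\widetilde\Lambda_K \cap K = \emptyset$ keeps the denominators bounded away from zero). Where you go beyond the paper is in noticing that the density hypothesis \cref{eq:conv} is stated only for $\mu \in C(K;\bR_{>0})$, whereas the residual $\phi - \widetilde\phi_{\textup{sum},0}$ has no sign guarantee (indeed, from \cref{eq:assum} one would rather expect it to be negative). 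The paper's proof applies \cref{eq:conv} directly to this difference without comment, which is strictly speaking a gap; your ``write $\mu = (\mu + C) - C$ with $C > \norm{\mu}_{L^\infty(K)}$'' step is the standard and correct way to extend the positive-cone density to all of $C(K;\bR)$, and makes the argument fully rigorous. So the approach is the same, but your version repairs a minor lacuna in the stated hypothesis versus its use.

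Two small points of care worth noting. First, you assert that $\uCk$ is holomorphic in $k$ on $K$ because $\uAk$ is invertible there; that is the right conclusion and matches \cref{th:keldysh}, though for the continuity of $\phi$ all one needs is continuity of $k \mapsto \uCk$, followed by continuity of the spectral norm, which is exactly how the paper phrases it. Second, your padding-with-zeros step when subtracting the two approximants is fine provided the family $\{\psi_j\}$ is used in nested form (taking the first $n_b$ functions), which is indeed the convention in \cref{eq:assumpoly}; otherwise one would just take the max of the two truncation lengths.
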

\begin{proof}
    As discussed in \cref{sec:spectral}, the spectrum of the scattering problem satisfies $\Lambda\subset\{0\}\cup\{z\in\bC,\imag z<0\}$ and has no finite accumulation points. As such, the interval $K$ has nonzero distance from $\Lambda$. Accordingly, the field amplification $\phi$ is bounded uniformly over $K$ by \cref{th:keldysh}. Recalling definition \cref{eq:amplificationsvd}, the continuity of $\phi$ can be easily deduced by combining the facts that (i) the matrix $\uAk^{-1}$ depends continuously on $k\in\bC\setminus\Lambda$, cf.~\cref{th:keldysh}, and (ii) the spectral norm is continuous as a $\bC^{(2\Nh)\times(2\Nh)}\to\bR_{\geq 0}$ function.

    If no surrogate pole $\lambda\in\widetilde\Lambda_K$ lies in $K$, the sum $\widetilde\phi_{\textup{sum},0}$, cf.~\cref{eq:assum}, is a continuous function. As such, the claim follows by applying \cref{eq:conv} to the difference $\mu=\phi-\widetilde\phi_{\textup{sum},0}$, which is also continuous over $K$.
\end{proof}

Note that the ``universal-approximation property'' \cref{eq:conv}, which is sufficient for convergence, is satisfied by many common hierarchical bases, e.g., polynomials of increasing degree, the Fourier basis, or the piecewise-linear finite element basis on nested meshes. Low-regularity bases, e.g., the last above-mentioned option, are recommended for higher numerical stability, due to the low smoothness of $\phi$. In addition, we remark that the condition $\widetilde\Lambda_K\cap K=\emptyset$ is generally satisfied in practice due to round-off noise. If this is not true, it suffices to ``nudge'' any real poles by a small imaginary shift $(-\varepsilon\textup{i})$, $0<\varepsilon\ll1$.

Moreover, it is important to note that the convergence predicted by \cref{prop:conv} will inevitably be slow, due to the low regularity of $\phi$, which is continuous but not differentiable, due to the usual presence of kinks, as showcased, e.g., in \cite{grubisic23} and in our numerical tests in \cref{sec:numexp}. Such kinks are a simple consequence of the non-differentiability of the spectral norm. As such, we expect a convergence of order at most $\bigO{n_b^{-1}}$.

\begin{remark}\label{rem:conv}
    \cref{prop:conv} may be adapted to show that a suitable superposition of the basis functions $\{\psi_j\}_{j=1}^{n_b}$ converges to $\phi$ as $n_b\to\infty$ at the same rate \emph{even without the additional term} $\widetilde\phi_{\textup{sum},0}$. The justification for including this latter term is providing a better ``starting point'' for the surrogate, which already includes a fairly good approximation of any almost-resonant behavior.
\end{remark}

There exists an alternative to approximation \cref{eq:assumpoly}. Rather than using the triangular inequality to obtain the \emph{sum} of the fractions in \cref{eq:assum}, we can simply take their \emph{maximum}, in some sense following \cref{prop:asympt}:
\begin{equation}\label{eq:asmax}
    \phi(k)\approx\widetilde\phi_{\max,n_b}(k):=\max_{\lambda\in\Lambda_K}\frac{\widetilde\psi_{\lambda}}{\abs{k-\lambda}}+\sum_{j=1}^{n_b}\alpha_j\psi_j(k).
\end{equation}
The numerators $\widetilde\psi_{\lambda}$ may be found, e.g., with a heuristic collocation-based approach, through a direct sampling of $\phi(\widetilde k)$ at $\widetilde k=\real\lambda$:
\begin{equation*}
    \widetilde\psi_{\lambda}:=\abs{\real\lambda-\lambda}\phi(\real\lambda)=\abs{\imag\lambda}\phi(\real\lambda)\quad\textup{for any }\lambda\in\Lambda_K.
\end{equation*}
By construction, this simple choice yields $\phi(\real\lambda)=\widetilde\phi_{\max,0}(\real\lambda)$ for any $\lambda\in\Lambda_K$, as long as the maximum in \cref{eq:asmax} is attained at $\lambda$, i.e., whenever no approximate pole ``hides'' any other approximate pole. In particular, note that the pole pre-processing outlined in \cref{eq:polefilter} greatly helps in preventing this pole-hiding issue.

Furthermore, the coefficients $\alpha_j$ may be found by imposing additional interpolation conditions, either exactly or in a least-squares sense:
\begin{equation*}
    \phi(\widetilde k_i)=\widetilde\phi_{\max,n_b}(\widetilde k_i)\quad\textup{for all }i=M_K'+1,\ldots,M_K'+n_o.
\end{equation*}
This procedure has the same complexity as the ``sum-based'' one.

The benefit of the expression $\widetilde\phi_{\max,0}$, compared to $\widetilde\phi_{\textup{sum},0}$, is that it is \emph{not} globally smooth, but only piecewise differentiable. As our numerical results in \cref{sec:numexp} show, this feature allows $\widetilde\phi_{\max,n_b}$ to better mimic the regularity of the exact field amplification $\phi$, which is also only piecewise smooth. We still have convergence.

\begin{corollary}
    \Cref{prop:conv} also holds with $\widetilde\phi_{\max,n_b}$ in place of $\widetilde\phi_{\textup{sum},n_b}$.
\end{corollary}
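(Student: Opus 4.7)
The plan is to mimic the argument used for $\widetilde\phi_{\textup{sum},n_b}$ in \cref{prop:conv}, with the only change being the verification that the modified ``starting point'' $\widetilde\phi_{\max,0}$ is still a continuous function on $K$. Once this is established, subtracting $\widetilde\phi_{\max,0}$ from $\phi$ leaves a continuous function over $K$, and the universal approximation property \cref{eq:conv} yields coefficients $\{\alpha_j\}_{j=1}^{n_b}$ that drive $\widetilde\phi_{\max,n_b}$ to $\phi$ in the $L^\infty(K)$-norm.

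First, I would recall from the proof of \cref{prop:conv} that, under the assumption $\widetilde\Lambda_K\cap K=\emptyset$, the interval $K$ has positive distance from both the true spectrum $\Lambda$ and the approximate spectrum $\widetilde\Lambda_K$, and that $\phi$ is continuous on $K$. In the same setting, for each $\lambda\in\widetilde\Lambda_K$ the real-valued function $k\mapsto\widetilde\psi_{\lambda}/\abs{k-\lambda}$ is continuous on $K$, since its denominator is bounded away from $0$. Because $\widetilde\Lambda_K$ is finite, the pointwise maximum
\begin{equation*}
    k\mapsto \max_{\lambda\in\widetilde\Lambda_K}\frac{\widetilde\psi_{\lambda}}{\abs{k-\lambda}}
\end{equation*}
is the supremum of finitely many continuous functions, hence continuous on $K$.

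Consequently, $\mu:=\phi-\widetilde\phi_{\max,0}$ is a continuous function on $K$, and the assumption \cref{eq:conv} applied to $\mu$ (up to the same remark on the positivity hypothesis already used in the proof of \cref{prop:conv}, which can be circumvented by, e.g., adding a sufficiently large constant to $\mu$ and absorbing it into an arbitrary constant basis function, or by splitting $\mu$ into positive and negative parts) produces coefficients $\{\alpha_j\}_{j=1}^{n_b}$ such that $\sum_{j=1}^{n_b}\alpha_j\psi_j\to\mu$ uniformly on $K$. Adding $\widetilde\phi_{\max,0}$ back to both sides gives $\widetilde\phi_{\max,n_b}\to\phi$ in the $L^\infty(K)$-norm, as desired.

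I do not expect any substantial obstacle in this argument: the only genuinely new ingredient compared to \cref{prop:conv} is the continuity of the ``max'' term, which is immediate from the finiteness of $\widetilde\Lambda_K$. The mildly delicate point is ensuring that applying \cref{eq:conv} to the continuous, but possibly sign-changing, function $\mu$ is justified; this is the same non-issue already glossed over in the proof of \cref{prop:conv}, and can be dealt with by the elementary reductions just mentioned.
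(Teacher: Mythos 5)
Your proof is correct and follows essentially the same route as the paper's: observe that $\widetilde\phi_{\textup{max},0}$ is continuous on $K$ (being a pointwise maximum of finitely many continuous functions, since the denominators are bounded away from zero by $\widetilde\Lambda_K\cap K=\emptyset$), and then invoke the argument of \cref{prop:conv} verbatim. Your side remark about the positivity hypothesis in \cref{eq:conv} being applied to a possibly sign-changing $\mu$ is a fair observation, but it is a non-issue shared with the proof of \cref{prop:conv} itself and does not affect the corollary.
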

\begin{proof}
    The proof of \cref{prop:conv} still applies, since $\widetilde\phi_{\textup{max},0}$, cf.~\cref{eq:asmax}, is continuous over $K$.
\end{proof}

\subsection{The overall algorithm}\label{sec:sub:hybrid}
The overall hybrid surrogate-modeling-based procedure is summarized in \cref{algo:hybrid}. Exploiting the fact that $M_K'=\bigO{\nK}$, we can conclude that, under our working assumption that $\nK\ll \Nh$, the total complexity of the method is $\bigO{\Nh^3(\nK+n_o)+(\nK+n_b)\NK}$, with $c_{\textup{surrogate}}=\bigO{\nK+n_b}$. If $n_b\sim n_o=\bigO{\nK}$, which is our recommendation, these complexities simplify to those shown in the last row of \cref{tab:complexity}, which includes a comparison of the complexities of all discussed algorithms.

In particular, note that, asymptotically in $N_h$, building a hybrid approximation $\widetilde\phi\in\{\widetilde\phi_{\textup{sum},n_b},\widetilde\phi_{\max,n_b}\}$ is as costly as building a matrix-valued rational approximation via \cref{algo:rat}. However, the evaluation of a hybrid $\widetilde\phi$ is $N_h$-independent, in contrast to the $\bigO{N_h^2}$-cost of evaluating the surrogate built with \cref{algo:rat}.

This enhanced ``online efficiency'' (using surrogate modeling jargon) is confirmed in our numerical tests.

\begin{algorithm}[ht]
	\caption{Approximation of $\{\phi(k_i)\}_{i=1}^{\NK}$ by hybrid rational surrogate modeling}
	\begin{algorithmic}\label{algo:hybrid}
        \STATE{Compute $\uM^{1/2}$\hfill}\COMMENT{cost $\bigO{\Nh^3}$}
        \STATE{Draw the random vector $\vbh\in\bC^{2\Nh}$ with i.i.d.~Gaussian entries}
        \STATE{Select $\nK$ sample points $\{\overline k_i\}_{i=1}^{\nK}\subset\bC$, e.g., adaptively \cite{pradovera22,pradovera23,aaacontinuum}}
        \FOR{$\overline k\in\{\overline k_i\}_{i=1}^{\nK}$}
            \STATE{Compute sample $\uC{\overline k}\vbh=\uM^{1/2}\uA{\overline k}^{-1}\uM^{1/2}\vbh$\hfill}\COMMENT{cost $\bigO{\Nh^3}$}
        \ENDFOR
        \STATE{Get the weights $\{w_i\}_{i=1}^{\nK}$ by solving a data-fit problem\hfill}\COMMENT{cost $\bigO{\Nh \nK^3}$, cf.~\cref{rem:sketchedweights}}
        \STATE{Assemble the rational surrogate $k\mapsto\vell(k)\approx\uCk\vbh$ as in \cref{eq:barymatrix}}
        \STATE{Compute the rational surrogate's poles $\widetilde\Lambda_K$\hfill}\COMMENT{cost $\bigO{\nK^3}$, cf.~\cite[eqn.~(3.11)]{aaa}}
        \STATE{Select collocation points $\{\widetilde k_i\}_{i=1}^{M_K'+n_o}$}
        \FOR{$\widetilde k\in\{\widetilde k_i\}_{i=1}^{M_K'+n_o}$}
            \STATE{Evaluate $\phi(\widetilde k)$\hfill}\COMMENT{cost $\bigO{\Nh^3}$}
        \ENDFOR
        \STATE{Find the coefficients $\widetilde\psi_{\lambda}$ and $\alpha_j$ of either $\widetilde\phi_{\textup{sum},n_b}$ or $\widetilde\phi_{\max,n_b}$ as in \cref{sec:sub:fit}, cf.~\cref{eq:assum,eq:asmax}\\\hfill}\COMMENT{cost $\bigO{(\nK+n_o)(\nK+n_b)^2}$}
        \FOR{$k\in\{k_i\}_{i=1}^{\NK}$}
            \STATE{Evaluate either $\widetilde\phi(k)=\widetilde\phi_{\textup{sum},n_b}(k)$ or $\widetilde\phi(k)=\widetilde\phi_{\max,n_b}(k)$\hfill}\COMMENT{cost $\bigO{\nK+n_b}$}
        \ENDFOR
        \RETURN{$\{\widetilde\phi(k_i)\}_{i=1}^{\NK}$}
	\end{algorithmic}
\end{algorithm}

\begin{table}[bth]
    \centering
    \renewcommand{\arraystretch}{1.1}
    \begin{tabular}{c|c|c|c|c|}
        & Number of & \multicolumn{2}{c|}{Asymptotic complexity} & Main sources of\\
        \cline{3-4}
        & samples & Training & Evaluation (each) & inaccuracy\\
        \hline
        \cref{algo:direct} & $\NK$ & --- & $\Nh^3$ & ---\\
        \hline
        \cref{algo:rat} & $\nK\sim M_K$ & $\Nh^3\nK+\Nh^2\nK^3$ & $\Nh^2(\nK+\log\Nh)$ & rational approx.\\
        \hline
        \multirow{2}{*}{\cref{algo:sketch}} & $\hatnK\gtrsim M_Kq$ & \multirow{2}{*}{$\Nh^3\hatnK+\Nh\hatnK^3$} & \multirow{2}{*}{$\hatnK^2$} & rat.~approx. of\\
        & $q\sim\log\Nh$ & & & non-merom.~func.\\
        \hline
        \multirow{2}{*}{\cref{algo:hybrid}} & $\nK\sim M_K$ & \multirow{2}{*}{$\Nh^3\nK$} & \multirow{2}{*}{$\nK$} & rat.~approx.~\& \\
         & $n_o=\bigO{\nK}$ & & & error in \cref{eq:assumpoly} or \cref{eq:asmax}\\
        \hline
    \end{tabular}
    \caption{Summary of complexities of the presented algorithms when $M_K\ll \Nh$.}
    \label{tab:complexity}
\end{table}

\section{Numerical results}\label{sec:numexp}
In this section, we perform numerical tests to assess the effectiveness of our proposed methods on some 2D benchmarks. For the BE discretization, we use continuous piecewise-linear polynomials for both Dirichlet and Neumann traces. Our algorithms are coded in Python 3.10, with wrappers to interface the code with a C++ implementation of the P1-BEM, compiled with \texttt{c++} 11.4.0. For reproducibility, our code is made available at \url{https://github.com/pradovera/hybridScatteringAmplification.git}, where further information on software is also provided. All tests are performed on a machine with a 6-core 12-thread 3.5-GHz Intel\textsuperscript{\textregistered} Xeon\textsuperscript{\textregistered} processor. The BEM implementation takes full advantage of the multi-threaded architecture through OpenMP. Our tests involve the three scatterers displayed in \cref{fig:shapes}.

\begin{figure}[htb]
    \centering
    \includegraphics{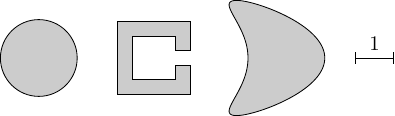}
    \caption{Disk, C-shaped, and kite scatterers.}
    \label{fig:shapes}
\end{figure}

\subsection{Disk scatterer}
We consider a simple unit-disk scatterer $\Omega=B_1(\mathbf{0})$ with contrast $\nin=\sqrt{20}$, cf.~\cref{eq:refraction}. We target the approximation of the field amplification over the wavenumber range $K=[1,3]$, over a uniform grid with $\NK=1001$ frequency points. The BEM discretization uses an 11\textsuperscript{th} order quadrature rule to approximate the integral operators on $\Nh=200$ ``panels''. (Other resolutions $\Nh\in\{50,100\}$ were also tested with similar results.) The number of RPM iterations used for estimating $\phi$ is $q=\lfloor10(1+\log \Nh)\rfloor=70$. For rational approximation of matrix- and vector-valued functions, we employ an $hp$-greedy version of minimal rational interpolation, as proposed in \cite{pradovera22}, with a tolerance of $1\%$ on the relative rational-approximation error in the Frobenius norm.

\subsubsection{Spectral features}\label{sec:numexp:disk:lambda}
\begin{figure}[tb]
    \centering
    \includegraphics[width=.95\textwidth]{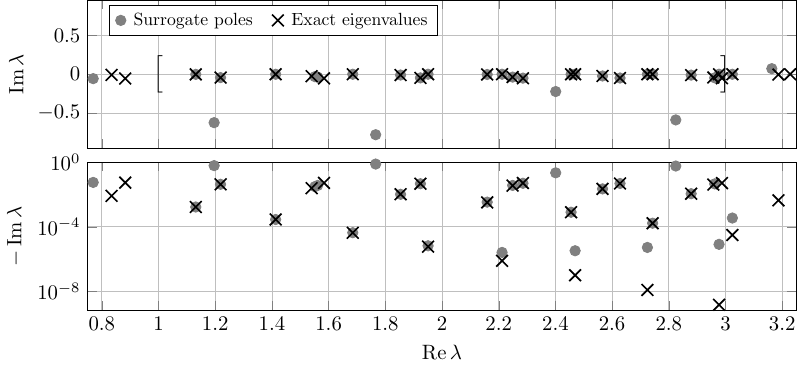}
    \caption{Eigenvalues of the problem with disk scatterer (black crosses). The gray circles are poles of the rational function $\uLz$ obtained with \cref{algo:rat}. In the bottom logarithmic-scale plot, poles closer to the real axis are further down.}
    \label{fig:circlepoles}
\end{figure}

In \cref{fig:circlepoles}, we show the complex eigenvalues of the problem, which, given the symmetry of the problem, are available analytically as described in \cref{sec:disk}. The exact eigenvalues are shown as crosses in \cref{fig:circlepoles}. From the bottom plot, which is logarithmic in the negative imaginary parts of the eigenvalues, we can observe sequences of eigenvalues converging to the real axis at exponential rate. By \cref{prop:asympt}, these correspond to spikes in the field amplification $\phi$.

In \cref{fig:circlepoles}, we also include the poles of the rational approximation obtained by \cref{algo:rat} as gray circles. We can see that the rational poles provide a fairly good approximation of some exact eigenvalues. The not-so-high accuracy is a straightforward consequence of the fact that the rational approximation is built based only on real-axis information. Accordingly, some errors are incurred in the estimation of the eigenvalues' locations, especially for those with larger (in magnitude) imaginary parts.

We can also observe how the rational approximation introduces an error in the eigenvalues that lie closest to the real axis, cf.~the eigenvalues with negative imaginary parts smaller than $10^{-4}$. This might seem surprising, since the real information should do a better-than-average job at identifying ``almost real'' eigenvalues. However, the (apparently) large errors are, in fact, rather small, magnified by the logarithmic scale. These are just consequences of the BEM discretization: refining the BEM mesh leads to a more accurate approximation of the almost-real eigenvalues.

Moreover, we can see that one of the rational poles, $\lambda\approx3.16+0.07\textup{i}$, has positive imaginary part. This is a non-physical eigenvalue, but should not be necessarily considered spurious, cf.~\cref{rem:spurious}. Indeed, rational approximation sometimes relies on unstable poles to improve approximation accuracy. It is commonly suggested to either keep such non-physical poles, to ``flip'' them \cite{gustavsen99}, i.e., change the sign of their imaginary parts, or to remove them \cite{huybrechs23}. Four more spurious poles can also be observed in \cref{fig:circlepoles} (top), along a band with imaginary part between $-1$ and $-0.2$.

\subsubsection{Field amplification}
\begin{figure}[tb]
    \centering
    \includegraphics[width=.95\textwidth]{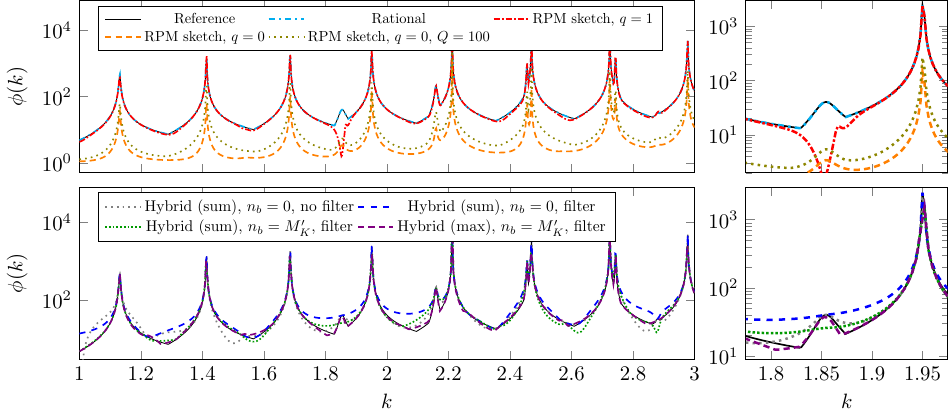}
    \caption{Field amplification for disk scatterer using \cref{algo:rat,algo:sketch} (top) and \cref{algo:hybrid} (bottom). The reference results obtained with \cref{algo:direct} are also included as a black solid line. The right plots are zoomed views.}
    \label{fig:circle_app}
\end{figure}

We display the results obtained with all our algorithms in \cref{fig:circle_app}. We will take the results of \cref{algo:direct} as reference. Many almost-resonances can be observed as quasi-asymptotes of $\phi$, confirming our conclusions based on \cref{fig:circlepoles}. We can observe a good approximation accuracy with \cref{algo:rat}, with the rational approximation requiring only $\nK=34$ adaptively selected samples. This is as expected, since the rational approximation \cref{eq:barymatrix} of the meromorphic function $\uCk$ is very efficient.

The RPM-based procedure is more unwieldy. We could not make the rational approximation in \cref{algo:sketch} converge for a ``large enough'' number of RPM iterations $q=70$, nor, in fact, for any $q\geq2$. This is a symptom of the non-meromorphy of the functions \cref{eq:barysketched}, which makes the number of needed sample points drastically increase with $q$. The number of samples is large even for $q=1$, the largest value for which we could make minimal rational approximation converge: the number of adaptively selected samples in that case is $\nK=55$ and $\nK=93$ for $p=0$ and $p=1$ in \cref{eq:barysketched}, respectively. The corresponding surrogate is fairly accurate, although the error is larger at a few locations, cf.~the top-right plot in \cref{fig:circle_app}.

Choosing $q=0$ in RPM, as suggested in \cref{rem:nonmero} to ease the approximation effort, results in a gross underestimation of $\phi$, since the RPM estimate \cref{eq:sketch} has not yet converged to the exact spectral norm. Still, the locations of the peaks of $\phi$ are identified well. One might think that the results could be improved by increasing the number of sketched directions, replacing the random vector $\vbh$ with a $((2\Nh)\times Q)$-matrix $\uBh$ with random i.i.d. entries. However, our experiments in this direction (see \cref{fig:circle_app} for the results with $Q=100$) resulted in very marginal improvements. These failures can be explained by the ``concentration of measure'' phenomenon, as encoded, e.g., by the Johnson-Lindenstrauss lemma \cite{JL}, which predicts a good approximation accuracy only if at least $Q\sim e^{2\Nh}$ random vectors are considered.

The results obtained with the hybrid approach in \cref{algo:hybrid} are, by design, less accurate than those of \cref{algo:rat}, although very good accuracy is retained near the peaks of $\phi$. The hybrid approach starts with the rational approximation of a ``sketched'' version of $\uCk$, which requires $\nK=35$ adaptively selected samples. This number of samples is close to that of \cref{algo:rat} for the approximation of $\uCk$ directly, since the rational-approximation problems solved by \cref{algo:rat,algo:hybrid} have roughly the same ``difficulty''.

Starting from the case \cref{eq:assum} without extra terms ($n_o=n_b=0$), we can observe that the results change depending on whether potentially spurious poles are filtered as in \cref{eq:polefilter}. Specifically, in this experiment, by filtering out $14$ poles in $\widetilde\Lambda_K^{\textup{all}}\setminus\widetilde\Lambda_K^{\textup{close}}$, we manage to recover an approximation that acts as a uniform upper-bound to the exact $\phi$, in accordance with \cref{eq:assum}, at the cost of some accuracy. This may be a very desirable property, depending on the application.

A partial explanation for this effect can be found by looking at the shape of the $14$ basis functions $\abs{\cdot-\lambda}^{-1}$, cf.~\cref{sec:sub:fit}, that are removed when poles are filtered. As a specific example, the basis function corresponding to the spurious pole $\lambda_{1}\approx1.20-0.62\textup{i}$, namely, $((k-1.20)^2+0.62^2)^{-1/2}$, is more ``spread out'' than the basis function corresponding to the non-spurious pole $\lambda_{2}\approx1.22-0.04\textup{i}$, whose real part is similar but whose imaginary part is much smaller, namely, $((k-1.22)^2+0.04^2)^{-1/2}$. Due to its larger variance, the former basis function has a global effect on the approximation $\widetilde\phi_{\textup{sum},0}$, breaking the ``upper-bound'' property of \cref{eq:assum} at wavenumbers far from $\real\lambda_{2}$.

We also test the hybrid algorithm with $n_b>0$ extra basis elements, by taking a piecewise-linear finite element basis on a uniform partition of $K$ into $(n_b-1)$ intervals. In our experiment, we set $n_o=n_b=M_K'=20$, the number of (filtered) rational poles. This leads to an enhanced approximation accuracy. Interestingly, the ``max'' approximation $\widetilde\phi_{\textup{max},n_b}$, defined in \cref{eq:asmax}, seems to perform better than the ``sum'' one $\widetilde\phi_{\textup{sum},n_b}$, defined in \cref{eq:assumpoly}.

\subsubsection{Approximation errors}
\begin{figure}[tb]
    \centering
    \includegraphics[width=.95\textwidth]{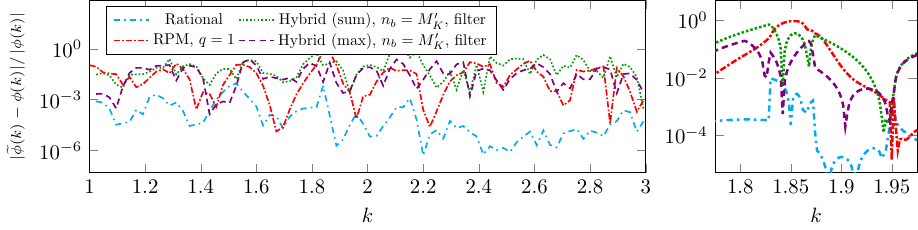}
    \caption{Relative error in field amplification for the disk. The reference results have been obtained with \cref{algo:direct}. The data in the left plot has been coarsened for readability. The right plot is a zoomed view.}
    \label{fig:circle_error}
\end{figure}

The approximation errors are shown in \cref{fig:circle_error}, again taking the results of \cref{algo:direct} as reference. From \cref{fig:circle_error}, we can draw quantitative conclusions confirming our observations above. In particular, we can see how the relative approximation error achieved by \cref{algo:rat} is uniformly below the prescribed $10^{-2}$ tolerance, although the tolerance is imposed on $\uCk$ (in the Frobenius norm) rather than on $\phi$.

To get a more concise error measure, we also compute the root-mean-square relative error
\begin{equation*}
    \left(\frac1{\NK}\sum_{i=1}^{\NK}\frac{|\widetilde\phi(k_i)-\phi(k_i)|^2}{\abs{\phi(k_i)}^2}\right)^{1/2}.
\end{equation*}
This is reported in the last column of \cref{tab:disk}. Therein, we can see further confirmation of our conclusions. Notably, $\widetilde\phi_{\textup{max},M_K'}$ achieves an approximation error more than 3 times lower than $\widetilde\phi_{\textup{sum},M_K'}$ does.

\begin{table}[tb]
    \centering
    \renewcommand{\arraystretch}{1.1}
    \begin{tabular}{c|c|c|c||c|c|c||c|}
        \multicolumn{3}{c|}{} & \multirow{2}{*}{$\nK$} & \multicolumn{2}{c|}{Surrogate training [s]} & Evaluation & RMS rel.\\
        \cline{5-6}
        \multicolumn{3}{c|}{} & & rat.~app. & extra & [s] & error\\
        \hline
        \multicolumn{3}{c|}{\cref{algo:direct}} & --- & --- & --- & $2.26\cdot10^4$ & ---\\
        \hline
        \multicolumn{3}{c|}{\cref{algo:rat}} & $34$ & $7.87\cdot10^2$ & --- & $9.55\cdot10^0$ & $0.2\%$\\
        \hline
        \multicolumn{2}{c|}{\multirow{2}{*}{Alg.~\ref{algo:sketch}}} & $q=1$ & $55+93$ & $3.42\cdot10^3$ & --- & $1.54\cdot10^{-1}$ & $12.6\%$\\
        \cline{3-8}
        \multicolumn{2}{c|}{} & $q=0$ & $35$ & $8.15\cdot10^2$ & --- & $1.29\cdot10^{-1}$ & $91.4\%$\\
        \hline
        \multirow{4}{*}{Alg.~\ref{algo:hybrid}} & \multirow{2}{*}{sum} & $n_o=0$ & \multirow{4}{*}{$35$} & \multirow{4}{*}{$8.11\cdot10^2$} & $4.66\cdot10^2$ & $3.74\cdot10^{-3}$ & $63.6\%$\\
        \cline{3-3}\cline{6-8}
        & & \multirow{2}{*}{$n_o=M_K'=20$} & & & \multirow{2}{*}{$9.40\cdot10^2$} & $6.21\cdot10^{-3}$ & $25.0\%$\\
        \cline{2-2}\cline{7-8}
        & \multirow{2}{*}{max} & & & & & $5.94\cdot10^{-3}$ & $7.4\%$\\
        \cline{3-3}\cline{6-8}
        & & $n_o=2M_K'=40$ & & & $1.38\cdot10^3$ & $8.09\cdot10^{-3}$ & $4.5\%$\\
        \hline
    \end{tabular}
    \caption{Timings for disk scatterer. The ``extra'' step relates to taking the $(M_K'+n_o)$ samples needed to set up problem \cref{eq:assumfitpoly}. The last column shows the root-mean-square (RMS) relative approximation errors.}
    \label{tab:disk}
\end{table}

For completeness, we also test $\widetilde\phi_{\textup{max},2M_K'}$, which, compared to $\widetilde\phi_{\textup{max},M_K'}$, is characterized by a $33\%$ increase in the number of terms in \cref{eq:asmax} and, consequently, in the cost of building and evaluating the surrogate. The corresponding approximation error decreases by a significant amount, while retaining an extremely low surrogate-evaluation cost.

\subsubsection{Timings}
The timings of the different approaches are summarized in \cref{tab:disk}, where they are split into training and evaluation phases. The total runtime is the sum of the three reported times. In this experiment, \cref{algo:rat} seems to be the most effective, achieving the lowest error out of all methods employing surrogates, with also the lowest total computation time. Note, in particular, how the evaluation of the surrogate field amplification is about 3 orders of magnitude faster than its direct computation by the BEM, although both methods must find the largest singular value of a $(2\Nh)\times(2\Nh)$ matrix. This is because the most expensive steps in the BEM are matrix assembly and inversion, which are avoided through the use of the rational surrogate in the evaluation step.

Moreover, although the training phase of the hybrid approach is about twice as costly, it leads to an incredibly cheap surrogate, as evidenced by the extremely low evaluation time. Overall, \cref{algo:hybrid} might be superior to \cref{algo:rat} when $\Nh$ or $\NK$ are very large, compared to $\nK$, as long as one is willing to accept a slightly higher approximation error.

\begin{remark}\label{rem:compare}
    As a further comparison, we also ran the algorithm introduced in \cite{grubisic23}, whose objective is to identify the locations of the peaks of $\phi$. This is achieved by constructing a piecewise-polynomial surrogate model for $\phi(\cdot)^{-1}$, relying on an $h$-adaptive procedure (with respect to frequency) that targets all local maxima of $\phi$. We ran the algorithm over the wavenumber range $K$, using the parameters suggested in \cite{grubisic23}.

    In total, the algorithm required $\nK=196$ samples to converge, a much higher number than for our rational-approximation routines, since the latter better exploit the meromorphic structure of the problem. In addition, note that \cite{grubisic23} requires expensively computing derivatives of $\phi(\cdot)^{-1}$ at each sample point. As such, the cost of each sample, although still cubic in complexity, is higher than just sampling $\phi$ by more than an order of magnitude. The increased per-sample cost made the training phase of the algorithm last longer than $2\cdot10^5$ seconds, an order of magnitude more than our baseline, \cref{algo:direct}.

    Moreover, the method from \cite{grubisic23} was worse than our best approaches, namely, \cref{algo:rat,algo:hybrid}, also in terms of approximation accuracy, with particularly large errors far away from the peaks of $\phi$. This is a consequence of the fact that, by design, the method from \cite{grubisic23} targets only the peaks of the field amplification, neglecting regions where the behavior of $\phi$ is more ``tame''.
\end{remark}

\subsection{C-shaped and kite-shaped scatterer}
Now we move to two tests involving more complex scatterers, with the ``C'' and ``kite'' shapes shown in \cref{fig:shapes} (center and right). We now target a larger wavenumber range $K=[1,5]$, over a uniform grid with $\NK=2001$ points. All other parameters are as in the previous experiment. Taking \cref{algo:direct} as reference, we only test the most promising approaches, namely, \cref{algo:rat,algo:hybrid}. In particular, we test the latter hybrid approach with (i) filtered poles, (ii) $n_o=n_b=M_K'$, (iii) both ``sum'' and ``max'' flavors.

\begin{figure}[tb]
    \centering
    \includegraphics[width=.95\textwidth]{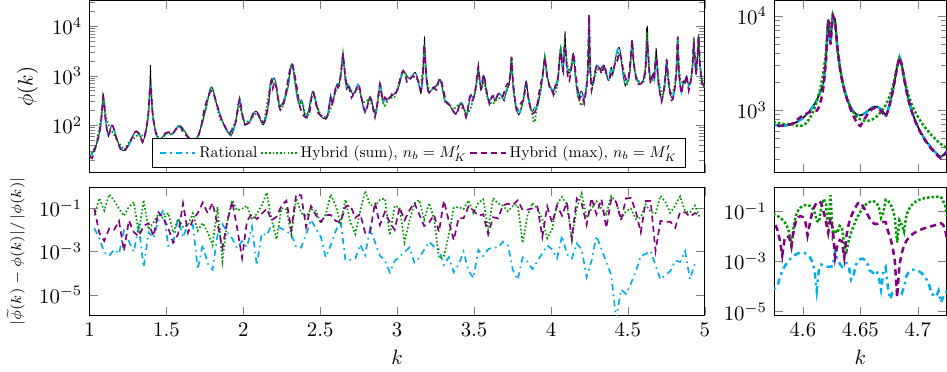}
    \caption{Top row: field amplification for C-shaped scatterer using \cref{algo:rat} and \cref{algo:hybrid} with $n_b=M_K'$. The reference results obtained with \cref{algo:direct} are also included as a black solid line. Bottom row: relative approximation error. The data in the bottom left plot has been coarsened for readability. The right plots are zoomed views.}
    \label{fig:cshape_app}
\end{figure}

\begin{figure}[tb]
    \centering
    \includegraphics[width=.95\textwidth]{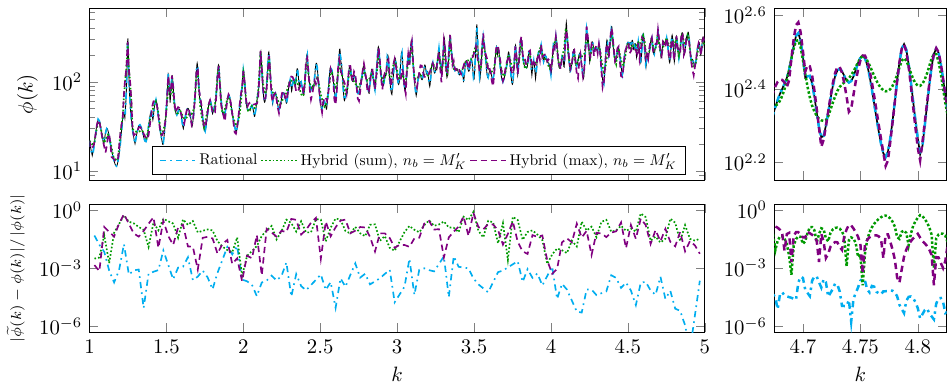}
    \caption{Top row: field amplification for kite scatterer using \cref{algo:rat} and \cref{algo:hybrid} with $n_b=M_K'$. The reference results obtained with \cref{algo:direct} are also included as a black solid line. Bottom row: relative approximation error. The data in the bottom left plot has been coarsened for readability. The right plots are zoomed views.}
    \label{fig:kite_app}
\end{figure}

The results for the ``C'' and ``kite'' shapes are displayed in \cref{fig:cshape_app,fig:kite_app}, respectively. Therein, we can observe how both scatterers yield much more complicated field amplification profiles than the disk's. The behavior of $\phi$ is especially wild for the kite, notably, by a scaling argument, because of its larger area. This complexity also emerges in the form of the large(r) numbers $\nK$ of samples that the adaptive rational-approximation routine must take to achieve the prescribed $10^{-2}$ relative tolerance: $\nK=127$ for the ``C'' shape and $\nK=196$ for the kite.

The accuracies of our methods are consistent with those observed in our first experiment: the ``max'' version of the hybrid algorithm slightly improves on the ``sum'' version, although the improvement is not as striking as with the disk scatterer. The approach based on matrix rational approximation is more accurate than both hybrid methods by at least an order of magnitude. The root-mean-square approximation errors, reported in the last column of \cref{tab:shapes}, confirm these observations.

\begin{table}[tb]
    \centering
    \renewcommand{\arraystretch}{1.1}
    \begin{tabular}{c|c|c|c|c||c|c|c||c|}
        \multicolumn{4}{c|}{} & \multirow{2}{*}{$\nK$} & \multicolumn{2}{c|}{Surrogate training [s]} & Evaluation & RMS rel.\\
        \cline{6-7}
        \multicolumn{4}{c|}{} & & rat.~app. & extra & (total) [s] & error\\
        \hline
        \multirow{5}{*}{\rotatebox{90}{C shape\hspace{.2em}}} & \multicolumn{3}{c|}{\cref{algo:direct}} & --- & --- & --- & $3.60\cdot10^4$ & ---\\
        \cline{2-9}
        & \multicolumn{3}{c|}{\cref{algo:rat}} & $127$ & $2.39\cdot10^3$ & --- & $3.29\cdot10^1$ & $1.0\%$\\
        \cline{2-9}
        & \multirow{3}{*}{Alg.~\ref{algo:hybrid}} & sum & \multirow{2}{*}{$n_o=M_K'=89$} & \multirow{3}{*}{$150$} & \multirow{3}{*}{$2.80\cdot10^3$} & \multirow{2}{*}{$3.31\cdot10^3$} & $8.52\cdot10^{-3}$ & $16.2\%$\\
        \cline{3-3}\cline{8-9}
        & & \multirow{2}{*}{max} & & & & & $8.25\cdot10^{-3}$ & $12.0\%$\\
        \cline{4-4}\cline{7-9}
        & & & $n_o=2M_K'=178$ & & & $4.98\cdot10^3$ & $1.30\cdot10^{-2}$ & $5.6\%$\\
        \hline
        \multirow{5}{*}{\rotatebox{90}{kite\hspace{.2em}}} & \multicolumn{3}{c|}{\cref{algo:direct}} & --- & --- & --- & $3.21\cdot10^4$ & ---\\
        \cline{2-9}
        & \multicolumn{3}{c|}{\cref{algo:rat}} & $196$ & $3.28\cdot10^3$ & --- & $3.56\cdot10^1$ & $0.5\%$\\
        \cline{2-9}
        & \multirow{3}{*}{Alg.~\ref{algo:hybrid}} & sum & \multirow{2}{*}{$n_o=M_K'=130$} & \multirow{3}{*}{$194$} & \multirow{3}{*}{$3.17\cdot10^3$} & \multirow{2}{*}{$4.30\cdot10^3$} & $8.89\cdot10^{-3}$ & $18.7\%$\\
        \cline{3-3}\cline{8-9}
        & & \multirow{2}{*}{max} & & & & & $8.24\cdot10^{-3}$ & $16.4\%$\\
        \cline{4-4}\cline{7-9}
        & & & $n_o=2M_K'=260$ & & & $6.47\cdot10^3$ & $1.37\cdot10^{-2}$ & $7.7\%$\\
        \hline
    \end{tabular}
    \caption{Timings for C-shaped and kite scatterers. The ``extra'' step relates to taking the $(M_K'+n_o)$ samples needed to set up problem \cref{eq:assumfitpoly}. The last column shows the root-mean-square (RMS) relative approximation errors.}
    \label{tab:shapes}
\end{table}

The methods' timings, reported in \cref{tab:shapes}, are also consistent with those measured for the disk scatterer. Once more, looking only at the evaluation of $\phi$ and $\widetilde\phi$, we see that \cref{algo:rat,algo:hybrid} are about $10^3$ and $10^6$ times faster, respectively, than the direct evaluation in \cref{algo:direct}. The speed-up obtained with the hybrid method is quite impressive, especially considering the accuracy attained. In all cases, building the surrogate model is about an order of magnitude faster than evaluating the field amplification directly.

\section{Conclusions}
In this work, we have introduced several methods for approximating the field amplification $\phi$ related to scattering problems. Through rational approximation, we are able to exploit the problem's meromorphic structure to great effect, leading to considerable computational savings. Among the presented algorithms, the ones that show the greatest potential are \cref{algo:rat,algo:hybrid}.
\begin{itemize}
\item \cref{algo:rat} performs rational approximation of the BEM-discretized solution operator, achieving great accuracy at the cost of a super-quadratic surrogate-evaluation cost due to the need to evaluate the spectral norm of large matrices.
\item The ``hybrid'' method from \cref{algo:hybrid} targets the field amplification more directly, through a scalar interpolatory approach. This reduces the evaluation cost at the price of some accuracy, while keeping the training cost comparable to that of \cref{algo:rat}. The hybrid method uses rational approximation as a tool to greatly improve the approximation quality and efficiency, by enriching the interpolation basis with some additional rational terms. This strategy ultimately enables a modest approximation accuracy with an evaluation cost that is much lower than that of \cref{algo:rat}.
\end{itemize}

Both rational and hybrid approaches yield excellent results in our three test cases, especially when looking at the identification of the locations of the almost-resonances, namely, the spikes in the field amplification. In our view, it is important to note how well all our approaches perform, despite their very limited ``intrusiveness'': none of the four methods presented requires access to the structure of the BEM operators, namely, to their specific frequency dependence. This feature is what ultimately enables applying our methods to a problem whose parameter dependence is so intrinsically nonlinear. Moreover, we highlight how our methods are able to succeed at approximating field amplification despite being limited to sampling real wavenumbers only.

Concerning future research directions, we note that the field amplification has been defined in terms of \emph{surface potentials} on the interface $\Gamma=\partial\Omega$. In some cases, it might be of greater interest to consider \emph{volume} field amplifications, e.g., measured in the $\Honein$-metric: $\phi_\Omega(k):=\sup_{\uinc\neq0}\norm{\utot(k)}_{\Honein}/\norm{\uinc}_{\Honein}$, cf.~\cref{eq:strongform}. In our setting, $\phi_\Omega$ may be computed via the representation formula \cref{eq:green}, which provides an \emph{invertible} extension operator from surface potentials to Helmholtz solutions over $\bR^d$. In principle, our discussion and our techniques should apply also to $\phi_\Omega$. In particular, our methods should be able to well approximate the locations of the problem's quasi-resonances, which appear as ``peaks'' both in $\phi$ and $\phi_\Omega$. However, in such case the approximation task is much more challenging, since (i) the wavenumber-dependent extension operator makes $\phi_\Omega$ depend on $k$ in a more complicated way than $\phi$, and (ii) building and inverting the extension operator is difficult in itself. For these reason, further analysis and implementation work remain to be done.

\appendix

\section{Exact scattering eigenvalues on the disk}\label{sec:disk}
Let $J_\nu$ and $H_\nu$ denote Bessel and Hankel functions of the first kind of index $\nu\in\mathbb Z$, respectively. Using polar coordinates, the eigenfunctions of the scattering problem for the unit disk $B_1(\mathbf{0})$ with refraction index $n_i$ are of the form
\begin{equation*}
    u_\nu(r,\theta)=\begin{cases}
        \alpha J_\nu(\nin\lambda r)e^{\textup{i}\nu\theta}\quad&\textup{if }r<1,\\
        \beta H_\nu(\lambda r)e^{\textup{i}\nu\theta}\quad&\textup{if }r>1,
    \end{cases}
\end{equation*}
using the integer $\nu\in\mathbb Z$ to index the spectrum. The above scalars $\lambda$, $\alpha$, and $\beta$ solve the following nonlinear eigenvalue problem:
\begin{equation*}
    \textup{find }\lambda\in\bC\textup{ and }(\alpha,\beta)\in\bC^2\setminus\{\mathbf0\}\textup{ s.t.}\begin{pmatrix}
        J_\nu(\nin\lambda) & H_\nu(\lambda)\\
        \nin\lambda J_\nu'(\nin\lambda) & \lambda H_\nu'(\lambda)
    \end{pmatrix}\begin{bmatrix}
        \alpha\\ -\beta
    \end{bmatrix}=\mathbf0.
\end{equation*}
The above $2\times 2$ transcendental system encodes compatibility conditions of Dirichlet and Neumann traces across the interface $\Gamma$. In our numerical tests, we solve this nonlinear eigenvalue problem by using Beyn's method \cite{beyn12}.

\section*{Data availability}

The C++ and Python source code to reproduce this paper's numerical experiments can be downloaded from \url{https://github.com/pradovera/hybridScatteringAmplification.git}.

\bibliographystyle{abbrvurl}
\bibliography{bibliography}

\end{document}